\def \d{\, \textup{d}}
\def \T{\textup{T}}
\def \u{\textup{\textbf{u}}}
\def \caSD{\textup{\textbf{R}}}
\def \Rbar{\widebar{\textup{\textbf{R}}}}
\def \diag{\textup{diag}}
\def \R{R}
\def \Riemann{\mathcal{R}}
\def \D{\textup{D}}
\def \poly{\mathcal{P}}
\def \IR{\mathcal{I}_{R}}
\def \M{\mathcal{M}}
\def \Q{\mathcal{Q}}
\def \Mequivalent{\widetilde{\mathcal{M}}}
\def \Mtilde{\mathcal{M}}
\def \H{\mathcal{H}}
\def \L{\mathcal{L}}
\def \smallestEV{\sigma_{\min}}
\def \largestEV{\sigma_{\max}}
\def \ST{\theta}
\def \NumFlux{\boldsymbol{\bar{F}}}
\def \mub{\hat{\mu}}
\def \mut{\hat{\mu}^*}
\def \mud{\hat{\mu}^s}
\def \Rayleigh{\mathcal{Q}}
\def \RayleighQ{\mathcal{Q}^{s}}
\def \Hlinear{\textbf{H}}
\def \Mlinear{\textbf{M}}
\def \intL{\int_0^L}
\DeclareFontFamily{U}{mathx}{\hyphenchar\font45}
\DeclareFontShape{U}{mathx}{m}{n}{<-> mathx10}{}
\DeclareSymbolFont{mathx}{U}{mathx}{m}{n}
\DeclareMathAccent{\widebar}{0}{mathx}{"73}
\newtheorem{proposition}{Proposition}
\theoremstyle{definition}
\newtheorem{remark}{Remark}
\definecolor{gruen}{rgb}{0,0.5,0}
\definecolor{braun}{rgb}{0.5,0.25,0.25}
\title{Numerical boundary control for semilinear hyperbolic systems} 
\author[Stephan Gerster, Felix Nagel, Aleksey Sikstel, Giuseppe Visconti]{}
\subjclass{Primary: 35L04, 93B52, 93D05; 
Secondary:  65N08.
}	
 \keywords{Semilinear hyperbolic balance laws, boundary control, Lyapunov stabilization, high-order discretization.}
 \email{stephan.gerster@gmail.com}
 \email{felix.nagel@rwth-aachen.de}
 \email{sikstel@mathematik.tu-darmstadt.de}
 \email{giuseppe.visconti@uniroma1.it}
\begin{document}
\maketitle

\centerline{\scshape Stephan Gerster}
\medskip
{\footnotesize
 \centerline{Universit\`{a} degli Studi dell'Insubria, Como, Italy}}

\medskip

\centerline{\scshape Felix Nagel}
\medskip
{\footnotesize
	\centerline{RWTH Aachen University,  Germany
	}}

\medskip

\centerline{\scshape Aleksey Sikstel}
\medskip
{\footnotesize
	\centerline{Technische Universität Darmstadt, Germany            
	}}

\medskip

\centerline{\scshape Giuseppe Visconti}
\medskip
{\footnotesize
	\centerline{Sapienza Universit\`{a} di Roma, Italy
}}

\bigskip


\begin{abstract}

This work is devoted to the  design of boundary controls of physical systems that are described by semilinear hyperbolic balance laws. 
A computational framework is presented that yields sufficient conditions for a boundary control to steer the system towards a desired state. The presented approach is based on a Lyapunov stability analysis and a CWENO-type reconstruction.


\end{abstract}

\section*{Introduction}

Systems of hyperbolic partial differential equations model fluid flow, chemotaxis  and viscoplastic material dynamics~\cite{O1,Leveque,Kac1974,Inelasticity}. 
Boundary stabilization of these problems has been  studied intensively in the past years~\cite{O1,O2}. 
An underlying tool for the study of these problems are Lyapunov functions that yield upper bounds on the deviation from steady states in suitable norms. 
The virtue of this approach are control rules that  do not require the solution of the whole system, but  take  only measurements at the boundaries into account. 
So-called~\emph{dissipative} boundary conditions~\cite{L1,L2,L4,L5} 
ensure exponential decay of a continuous Lyapunov function, which in turn guarantees that the solution  converges exponentially fast to a desired steady state.  

More precisely, a general theory for the stabilization of linear conservation laws  with respect to the~$L^2$-norm  is available~\cite[Sec.~3]{O1}. 
For nonlinear systems, however, results are still partial. A problem is posed by the fact that Lyapunov's indirect method~\cite{Khalil} does not necessarily hold for hyperbolic systems. Furthermore,  solutions to systems of conservation laws exist in the classical sense only for a finite time due to formation of shocks~\cite{Riemann1859}. 
To this end, stability results are typically stated in terms of the Sobolev ${H^2}$-norm or in the $C^1$-norm~\cite{L5,L4,Hayat2019,Hu2019} and 
restrictive smoothness assumptions on the ${H^2}$-norm of the initial data may be needed~\cite[Sec.~4]{O1}. 
Furthermore, 
most analytical results are based on the assumption that the influence of the source term is  small or in intuitive terms, the
considered balance laws are viewed as perturbations of conservation laws~\cite{L1}. 
On the other hand, if the destabilizing effect of the source term is sufficiently large, the system may be not stabilizable~\cite{Bastin2011,Gugat2019,GugatHerty2020}.

Recently, interest has increased in studying the stabilizability of semilinear hyperbolic systems, when the advection part is linear, but the source term is nonlinear~\cite[Sec.~10]{Petit}.  
In particular, analytical results are available for semilinear Euler equations~\cite{GugatHerty2020,Gugat2021,GugatSemiLinear2022}. 
Assumptions on initial data are typically imposed with respect to the ${H^1}$-Sobolev norm and, hence, are  less restrictive than for  general nonlinear systems. 
The assumption of a Lipschitz continuous source term even allows to establish estimates in terms of the $L^2$-norm~\cite{Prieur2018,Hayat2021}, which is desirable, since also discontinuous initial and boundary data can be treated. Analytical results for general semilinear systems, however, will always come along with restrictions on initial data. In particular, initial data must be sufficiently close to a steady state, where the distance is measured in a suitable norm, for instance in terms of the~$L^2$-~or~$H^1$-norm in space. Otherwise, a blow up of the solution in finite time may occur~\cite[Sec.~2]{Prieur2018}. 

These restrictions motivate a computational approach. 
Indeed, the development and analysis of numerical schemes that preserve continuous
stability results is an active field of research. In particular,  first-order upwind  discretizations of linear balance laws are used to construct discretized Lyapunov functions that decay exponentially  fast~\cite{D2,Gerster2019,Weldegiyorgis2020,Weldegiyorgis2021}. Moreover, a second-order scheme applied to scalar nonlinear conservation laws with dissipative feedback boundary conditions is analyzed in~\cite{Dus2022}. \\

\noindent
The main contribution of this work is a computational framework that is specifically taylored to semilinear boundary value problems 
without any smoothness assumptions on initial data. 
In contrast to most analytical results in the~$H^1$-setting, we measure the distance to a desired state by the  $L^2$-norm and use a Lyapunov function as upper bound, which allows to consider discontinuous solutions.  
Since no systematic procedure exists for deriving Lyapunov functions in this setting, we use a candidate Lyapunov function that is typically applied in the  linear case~\cite[Sec.~3]{O1}.  We parameterize it  up to a constant~$\mub>0$ that is computed numerically. 
Two approaches are presented and analyzed for determining an appropriate constant. Those are based on a weighted Rayleigh coefficient and eigenvalue estimates. 
The computational framework  is consistent with existing theoretical results, but also allows to investigate numerically problems which are beyond the current state of research on analytical control rules. 
The proposed method is based on high-order CWENO reconstructions~\cite{Semplice2019,Visconti2018,Cravero2018,Elena} that are  high-order accurate in smooth regions, but can  resolve discontinuities in an essentially nonoscillatory~(ENO) fashion. 
CWENO reconstructions  consist of weighted combinations of local reconstructions on different stencils. 
Furthermore, they also allow reconstructing  source terms and  boundary conditions at high order, a crucial feature when solving semilinear boundary value problems for balance laws. \\



\noindent
This paper is structured as follows. Section~\ref{SecSemiLinear} reviews semilinear hyperbolic boundary value problems. Section~\ref{SectionConditions} is devoted to their control. 
In particular, Lyapunov functions and their estimates as well as benchmark problems are introduced. 
Section~\ref{ComputationalFramework} describes the computational framework which is based on high-order  CWENO discretizations.
Finally, numerical results are presented in Section~\ref{SecNumericalResults}.

\section{Semilinear hyperbolic boundary value problems}\label{SecSemiLinear}

We consider semilinear hyperbolic balance laws of the form
\begin{equation}\label{SemiLinear}
\partial_t \u(t,x)
+
A(x) \partial_x \u(t,x)
=-
h\big(\u(t,x);x\big)
\end{equation}
that  are defined on a finite space interval~$[0,L]$. 
We consider~$2 \times 2$ systems, where the advection part~$A(x)=T(x) \Lambda(x) T^{-1}(x) $ is diagonalizable with distinct eigenvalues 
$$
\Lambda^-(x)
<0<
\Lambda^+(x)
\quad
\text{for all}
\quad
x\in[0,L]
\quad
\text{and}
\quad
\Lambda(x)
=
\diag\big\{
\Lambda^+(x),
\Lambda^-(x)
\big\}.
$$ 
Under the assumptions 
$
A(x)\in C^2\big(
[0,L];\mathbb{R}^{2\times 2}
\big)
$ 
and 
$
h\in C^2\big(
\mathbb{R}^2 \times [0,L];\mathbb{R}^2
\big)
$ 
the semilinear system~\eqref{SemiLinear} admits a classical smooth solution provided that initial data are differentiable~\cite{BRESSAN,O1}. Hence, it can be equivalently written in \textbf{Riemann invariants} 
$
\R(t,x)
\coloneqq
T^{-1}(x)
\u(t,x)
$ 
satisfying
\begin{equation}\label{DiagonalizedSystem}
\begin{aligned}
&\partial_t \R(t,x)
+
\Lambda(x)
\partial_x \R(t,x)
=-
G\big( \R(t,x);x \big) \\
&\text{for}
\quad
G\big( \R(t,x);x \big)
=
T^{-1}(x)
h\big(
T(x)\R(t,x);x
\big)
+
T^{-1}(x)\Lambda(x) \partial_x T(x)  \R(t,x).
\end{aligned}
\end{equation}
The diagonalized system~\eqref{DiagonalizedSystem} is endowed with possibly nonlinear feedback boundary conditions~$
B\in C^2\big(
\mathbb{R}^2;\mathbb{R}^2
\big)
$. 
The \textbf{initial boundary value problem (IBVP)} with  initial values $\IR\in L^2\big( (0,L); \mathbb{R}^2 \big)$, which satisfy boundary conditions,  reads as 
\begin{align}
\partial_t \R(t,x) + \Lambda(x) \partial_x \R(t,x) &=  -G\big( \R(t,x);x \big)
& & \text{for} \ \ t \in (0,T), \ x \in (0,L),  \label{CauchyProblem1} \\
\begin{pmatrix} \R^+(t,0) \\ \R^-(t,L) \end{pmatrix} 
&=
{B}
\begin{pmatrix} \R^+(t,L) \\ \R^-(t,0) \end{pmatrix} 
& & \text{for} \ \ t \in [0,T),  \label{CauchyProblem2} \\
\R(0,x)&= \IR(x)
& & \text{for} \ \ x \in [0,L].
\label{CauchyProblem3} 
\end{align}
Riemann invariants that come along with positive  speeds~$\lambda^+(x)>0$ are denoted as~$\R^+(t,x)$ and those with negative characteristic speeds as~$\R^-(t,x)$, respectively. 
Typical examples, see~e.g.~\cite[Sec.~1.11]{O1}, are the 
\textbf{Kac-Goldstein equations}, which explain the spatial pattern formations in chemosensitive populations.  The unknowns~$\u=(\rho,q)^\T$ are the  density~$\rho=\R^++\R^-$ and the mass flux~$q=\gamma(\R^+-\R^-)$ 
of right~{($\R^+$)} and left-moving~{($\R^-$)} cells, 
where  the velocity of cell motion is described by the parameter~$\gamma>0$ 
and 
$
\ST(\R^+,\R^-)
$ 
is a turning function.
\begin{align*}
&\textbf{Kac-Goldstein equations} 
&
&\ \ \textbf{diagonalized form} \\
&
\begin{aligned}
\rho_t
+
q_x
&=
0, \\
q_t 
+
\gamma^2
\rho_x
&=
2\ST\bigg(
\frac{\rho}{2}+\frac{q}{2\gamma},
\frac{\rho}{2}-\frac{q}{2\gamma}
\bigg)q
\end{aligned}
& & \ \
\begin{aligned}
\R^+_t
+
\gamma \R^+_x
&=
\ST(\R^+,\R^-) (\R^+ - \R^-), \\
\R^-_t 
-
\gamma
\R^-_x
&=
\ST(\R^+,\R^-) (\R^- - \R^+)
\end{aligned}
\end{align*}


\noindent
\textbf{Steady states} are denoted by  $\bar{\u}=(\bar{\rho},\bar{q})^\T$, ~$\bar{\R}=(\bar{\R}^+,\bar{\R}^-)^\T$. Those satisfy the conditions~$\partial_t \bar{\u}(x)=\partial_t \u(t,x)=0$, 
$\partial_t \bar{\R}(x)=\partial_t \R(t,x)=0$
 and are typically space-dependent. In the special case of Kac-Goldstein equations, however, the steady states are constant and read as
$$
\bar{\R}^\pm
=
\frac{1}{2}
\int_0^L 
\rho(0,x) \d x.
$$
\textbf{Boundary conditions} are specified by
\begin{equation} \label{BCKAC}
\begin{pmatrix} \R^+(t,0) \\ \R^-(t,L) \end{pmatrix} 
=
\begin{pmatrix}
& \hspace{-2mm}\kappa \\ \kappa 
\end{pmatrix}
\begin{pmatrix} \R^+(t,L) \\ \R^-(t,0) \end{pmatrix} 
\qquad
\text{for}
\qquad
|\kappa| \in [0,1].
\end{equation}
In particular the choice~$\kappa=1$ models the case, when cells are confined within  the spatial domain, 
since it holds~$q(t,0)=0$ and~$q(t,L)=0$.  


\noindent
According to~\cite[Th.~10.1]{Petit},  there exists  the following wellposedness result.  
Provided that initial data are sufficiently smooth and close to a steady state, i.e.
\begin{equation}\label{IVsmooth}
\text{there exists}
\quad
\delta>0
\quad
\text{such that}
\quad
\big\lVert
\u_0 - \bar{\u}
\big\rVert_{H^1\big((0,L);\mathbb{R}^{2}  \big)}
<\delta,
\end{equation}
the IBVP~\eqref{CauchyProblem1} -- \eqref{CauchyProblem3} 
has a unique maximal classical solution satisfying
$$
\u 
\in 
C^0\Big(
[0,T);
H^1\big( (0,L);\mathbb{R}^{2} \big)
\Big).
$$
Under the assumption~\eqref{IVsmooth}, there are 
conditions  available,  see~e.g.~\cite[Th.~10.2]{Petit}, that stabilize the dynamics at a steady state. 
This assumption, however, is relatively restrictive for hyperbolic systems, which may involve discontinuous solutions, e.g.~in the case of time-dependent boundary controls and for initial perturbations that are  away from a steady state. 
To this end, we consider in Section~\ref{SectionConditions} stabilization concepts with respect to the $L^2$-norm, which are typically applied to linear systems~\cite[Sec.~5]{O1}, and use them to establish a computational framework for semilinear problems~in~Section~\ref{ComputationalFramework}.

\section{Sufficient conditions for stability}\label{SectionConditions}

We are interested in  boundary controls that make the system converge exponentially fast to a steady state. As  in~\cite[Ch.~10]{Petit}, we introduce the distance to this steady state by~$
\Riemann(t,x) \coloneqq
\R(t,x)-\bar{\R}(x)
$. Then, the IBVP~\eqref{CauchyProblem1} -- \eqref{CauchyProblem3} reads as
\begin{align}
	\partial_t \Riemann(t,x) + \Lambda(x) \partial_x \Riemann(t,x) &=-  \mathcal{G}\big( \Riemann(t,x);x \big)
	& & \text{for} \ \ t \in (0,T), \ x \in (0,L),  \label{IBVP1} \\
	\begin{pmatrix} \Riemann^+(t,0) \\ \Riemann^-(t,L) \end{pmatrix} 
	&=
	\mathcal{B}
	\begin{pmatrix} \Riemann^+(t,L) \\ \Riemann^-(t,0) \end{pmatrix} 
	& & \text{for} \ \ t \in [0,T),  \label{IBVP2} \\
	\Riemann(0,x)&= \IR(x)-\bar{\R}(x)
	& & \text{for} \ \ x \in [0,L]. \label{IBVP3}
\end{align}
The source term and the boundary conditions are defined by
\begin{align*}
	\mathcal{G}\big( \Riemann;x \big)
	&\coloneqq
	G\big( \Riemann + \bar{\R}(x) ;x \big)
	-
	G\big( \bar{\R}(x) ;x \big), \label{SourceTerm}  \\
	\mathcal{B}
	\begin{pmatrix} \Riemann^+(t,L) \\ \Riemann^-(t,0) \end{pmatrix} 
	&\coloneqq
	{B}
	\begin{pmatrix} \Riemann^+(t,L)+\bar{R}^+(L) \\ \Riemann^-(t,0)+\bar{R}^-(0) \end{pmatrix} 
	-
	\begin{pmatrix}
		\bar{R}^+(0) \\
		\bar{R}^-(L)
	\end{pmatrix}. 
\end{align*}

\noindent
To specify boundary conditions, we
introduce a Lyapunov function candidate 
\begin{equation}\label{analyticalLF}
	\L(t;\mub) \coloneqq \intL \Riemann(t,x)^{\T} W(x;\mub) \Riemann(t,x) \d x
\end{equation}
with the weights~$W(x;\mub)\coloneqq \diag \big\{ W^+(x;\mub),W^-(x;\mub)  \big\}$ that are defined by
\begin{equation}\label{Weights}
	\begin{aligned}
		W^+(x;\mub)
		&\coloneqq
		\frac{ 1 }{\Lambda^+(x)}
		\textup{exp} \bigg(
		-\mub \int_0^x 
		\frac{1}{\Lambda^+(s)} \d s
		\bigg), \\
		W^-(x;\mub)
		&\coloneqq
		\frac{ 1 }{|\Lambda^-(x)|}
		\textup{exp} \bigg(
		\mub \int_x^L 
		\frac{1}{\Lambda^-(s)} \d s
		\bigg).
	\end{aligned}
\end{equation}


\begin{remark}
		As described in~\cite[Sec.~3.5]{O1} and as illustrated in Figure~\ref{FigureNetwork},  the $2\times 2$~system~{\eqref{CauchyProblem1}~--~\eqref{CauchyProblem3}} is extendable to a network with   $n$~arcs  by specifying appropriate coupling conditions. 
		Then, the weights~\eqref{Weights} must be replaced by those in~\cite[Th.~3.16]{O1}.
		In the sequel, this article is concerned with~$2 \times 2$ systems. 
	\begin{figure}[H]

\tikzstyle{block} = [thick,draw, fill=black!10, rectangle, 
minimum height=3em, minimum width=3em]
\tikzstyle{Kreis} = [thick,draw, fill=blue!20, circle, 
minimum height=3em, minimum width=3em]
\tikzstyle{sum} = [draw, fill=darkgreen!20, rectangle, 
minimum height=2em, minimum width=3em]
\tikzstyle{input} = [coordinate]
\tikzstyle{output} = [coordinate]
\tikzstyle{pinstyle} = [ pin edge={to-,thick,black}]
\tikzstyle{vecArrow} = [thick,double distance=23pt]

\hspace*{-8mm}
\begin{tikzpicture}[scale=0.082]

	\draw [vecArrow] (5,0) -- (50,0);
	\draw [vecArrow] (145,15) -- (80,2.6);
	\draw [vecArrow] (145,-15) -- (80,-2.6);
	
	\node [block,align=center] at (70,0) {\textbf{coupling conditions}\\
		$
		\begin{pmatrix}\color{blue} \R^+(t,0) \\ \color{red} \R^-(t,L) \end{pmatrix} 
		=
		B
		\begin{pmatrix} \color{blue} \R^+(t,L) \\ \color{red} \R^-(t,0) \end{pmatrix} 
		$
	};
	
	\node at (25,0) {arc 1};
	\node at (120,10.5) {arc 2};
	\node at (120,-10.5) {arc $n$};
	
	\node at (15.9,9) {\color{blue}$\R^{+,(1)}(t,x)$};
	\node at (28,-9) {\color{red}$\R^{-,(1)}(t,x)$};
	
	\node at (118,-21) {\color{blue}$\R^{+,(n)}(t,x)$};
	\node at (143,3.5) {\color{red}$\R^{-,(2)}(t,x)$};
	
	\node at (118,21) {\color{blue}$\R^{+,(2)}(t,x)$};
	\node at (143,-3.5) {\color{red}$\R^{-,(n)}(t,x)$};
	
	\node at (68,16) {\color{blue}$
		{
			\R^+= \Big(\R^{+,(1)},\ldots,\R^{+,(n)} \Big)^\T
		}$};
	
	\node at (68,-16) {\color{red}$
		{
			\R^-= \Big(\R^{-,(1)},\ldots,\R^{-,(n)} \Big)^\T
		}
		$};
	
	\draw [ultra thick, blue, decoration={markings,mark=at position 1 with	{\arrow[scale=2,>=stealth]{>}}},postaction={decorate}] (	28,9) -- (38,9);
	\draw [ultra thick, red, decoration={markings,mark=at position 1 with	{\arrow[scale=2,>=stealth]{>}}},postaction={decorate}] (15,-9) -- (5,-9);
	
	\draw [ultra thick, blue, decoration={markings,mark=at position 1 with	{\arrow[scale=2,>=stealth]{>}}},postaction={decorate}]  (132,-21.4) -- (145,-23.5);
	
	\draw [ultra thick, red, decoration={markings,mark=at position 1 with	{\arrow[scale=2,>=stealth]{>}}},postaction={decorate}] (130,-3.8) -- (120,-2);
	
	\draw [ultra thick, red, decoration={markings,mark=at position 1 with	{\arrow[scale=2,>=stealth]{>}}},postaction={decorate}] (130,3.8) -- (120,2);
	
	\draw [ultra thick, blue, decoration={markings,mark=at position 1 with	{\arrow[scale=2,>=stealth]{>}}},postaction={decorate}]  (132,21.4) -- (145,23.5);
	
\end{tikzpicture}

		\caption{Network with~$n$ arcs and coupling conditions~$B:\mathbb{R}^{2n}\rightarrow \mathbb{R}^{2n}$.}
		\label{FigureNetwork}
	\end{figure}
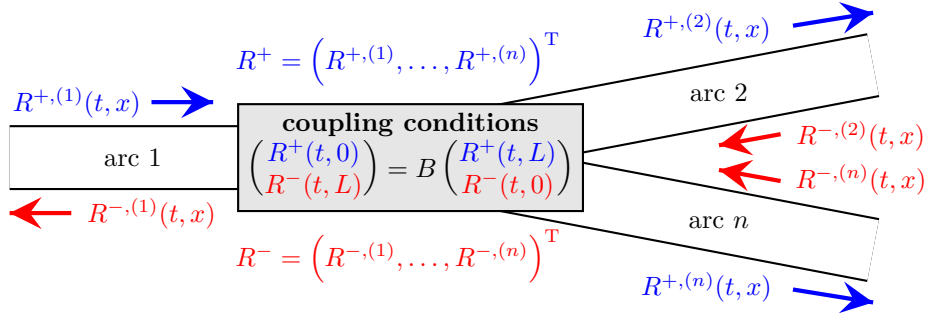
\end{remark}


\noindent
We introduce the notations~$
W^\pm_b\coloneqq W^\pm(b,\mub)$, 
$
\Lambda^\pm_b\coloneqq \Lambda^\pm(b)$, 
	$
	\Lambda_b  \coloneqq
	\diag\big\{
	\Lambda_b^+, \Lambda_b^-
	\big\}
	$,  
	$
	W_b  \coloneqq
	\diag\big\{
	W_b^+, W_b^-
	\big\}
	$, 
$\Riemann^\pm_b(t)\coloneqq \Riemann^\pm(t,b)$ with ${b\in\{0,L\}}$ 
and, for now, we assume 
\begin{equation} \label{ToStrongRegularity}
	\Riemann \in C^1\Big([0,\infty) \times [0,L];\mathbb{R}^{2}\Big).
\end{equation}
According to~\cite[Sec.~10.2]{Petit}, 
there exists a matrix~$\bar{\mathcal{G}}$ satisfying~$
\mathcal{G}(\Riemann;x)
=
\bar{\mathcal{G}}(\Riemann;x)
\Riemann
$ and 
$
\bar{\mathcal{G}}(0;x)
=
\D_{\Riemann} \mathcal{G}(\Riemann;x) \big|_{\Riemann=0}
$. 
This allows to define the matrices
\begin{align*}
	\Mtilde(\Riemann;x,\mub)
	&\coloneqq
	\mub\,
	W(x;\mub) 
	+
	W(x;\mub)\bar{\mathcal{G}}(\Riemann;x)
	+
	\bar{\mathcal{G}}(\Riemann;x)^\T W(x;\mub),\\
	\Mequivalent(\Riemann;x,\mub)
	& \coloneqq
	W^{-\nicefrac{1}{2}}(x,\mub) 
	\Mtilde(\Riemann;x,\mub) 
	W^{-\nicefrac{1}{2}}(x,\mub) , \\
	\H\big(\Riemann^+_L,\Riemann^-_0;\mub\big)
	&\coloneqq
	\mathcal{B}
	\begin{pmatrix} \Riemann^+_L \\ \Riemann^-_0 \end{pmatrix}^{\T}
	\begin{pmatrix}
		W^+_0(\mub)\Lambda^+_0 \\ &\hspace{-6mm}  W^-_L(\mub) |\Lambda^-_L|
	\end{pmatrix}	
	\mathcal{B}
	\begin{pmatrix} \Riemann^+_L \\ \Riemann^-_0 \end{pmatrix} \\
	&\quad \, - 
	\begin{pmatrix} \Riemann^+_L \\ \Riemann^-_0 \end{pmatrix}^{\T}
	\begin{pmatrix}
		W^+_L(\mub)\Lambda^+_L \\ &\hspace{-6mm}  W^-_0(\mub)|\Lambda^-_0|
	\end{pmatrix}	
	\begin{pmatrix} \Riemann^+_L \\ \Riemann^-_0 \end{pmatrix}.
\end{align*}
Then, the time derivative of the Lyapunov function is
\begin{align}
	\L'(t;\mub)
	=& \hspace{4.6mm}
	2 \intL \Riemann(t,x)^{\T}W(x;\mub) \partial_t\Riemann(t,x) \d x \nonumber \\
	=& 
	-2 \intL \Riemann(t,x)^{\T}W(x;\mub)\Lambda(x) \partial_x\Riemann(t,x) \,
	+  \Riemann(t,x)^{\T}W(x;\mub)\mathcal{G}(\Riemann;x) \d x \nonumber \\
	=&
	\intL
	\Riemann(t,x)^{\T} \Big[
	\partial_x \big( W(x;\mub)\Lambda(x) \big)
	-
	2\, W(x;\mub)\bar{\mathcal{G}}(\Riemann;x) 
	\Big]
	\Riemann(t,x) 
	\d x \nonumber \\
	&- \Big[
	\Riemann(t,L)^{\T} W_L(\mub)\Lambda_L \Riemann(t,L)
	-
	\Riemann(t,0)^{\T} W_0(\mub)\Lambda_0 \Riemann(t,0)
	\Big] \nonumber \\ 
	=&
	-\intL\Riemann(t,x)^\T \Mtilde\big(\Riemann(t,x);x,\mub\big)\Riemann(t,x) \d x
	+
	\H\big(\Riemann^+_L,\Riemann^-_0;\mub\big).
	\label{DerivativeLyapunov}
\end{align}


\noindent
We observe from~equation~\eqref{DerivativeLyapunov} that a \emph{sufficient, but not necessary} condition to make the Lyapunov function decay exponentially fast is to choose boundary conditions and a parameter~$\mub\geq 0$ such that
\begin{align}
	&\text{the inequality }
	\H\big(\Riemann^+_L,\Riemann^-_0;\mub\big) \leq 0 
	\text{ holds and }
	\label{A2}\tag{\textup{A1}} \\
	&\text{the matrix }
	\Mtilde(\Riemann;x,\mub)
	\text{ is strictly positive definite for all }
	x\in[0,L]. \label{A1tilde}\tag{$\textup{A2}$} 
\end{align}
Furthermore, we introduce the~\textbf{weighted Rayleigh quotient}
\begin{equation}\label{Rayleigh}
	\begin{aligned}
		\Rayleigh\big[
		\Riemann,\mub
		\big](t)
		&\coloneqq
		\big\lVert \Riemann^{(\mub)} \big\rVert_{L^2}^{-2}
		\left\langle \Riemann^{(\mub)}, \Mequivalent (\Riemann;\cdot,\mub) \Riemann^{(\mub)} \right\rangle_{L^2}
		\quad \ \text{for}\\
		\Riemann^{(\mub)}(t,x)
		&\coloneqq
		W(x;\mub)^{\nicefrac{1}{2}}  \Riemann(t,x)
	\end{aligned}
\end{equation}
where the~$L^2$-norm satisfies~$
\big\lVert \Riemann^{(\mub)} \big\rVert_{L^2}^{2}
\coloneqq
\big\lVert \Riemann^{(\mub)} \big\rVert_{L^2( (0,L);\mathbb{R}^{2} )}^{2}
=
\L(t;\mub)
$. 
Then, 
the time derivative~\eqref{DerivativeLyapunov}  fulfills
\begin{equation*}
	\L'(t;\mub)
	\leq
	-
	\Rayleigh\big[
	\Riemann,\mub
	\big](t)
	\L(t;\mub)
\end{equation*}
provided that~assumption~\eqref{A2} holds.
If the weighted Rayleigh quotient~\eqref{Rayleigh} remains strictly positive, i.e.
\begin{equation} \label{A1}\tag{\textup{A3}} 
	\Rayleigh\big[
	\Riemann,\mub
	\big](t)
	>\mu>0
	\quad
	\text{for all}
	\quad
	t \geq 0,
\end{equation}
the solution converges to the steady state exponentially fast. More precisely, 
the norm equivalence~$
\big\lVert \Riemann(t,x) \big\rVert_2^2
\sim
\big\lVert \Riemann(t,x) \big\rVert_{W(x)}^2
\coloneqq
\Riemann(t,x)^\T W(x) \Riemann(t,x) 
$ and the estimate~$\L'(t)\leq - \mu\L(t) $ imply for a \emph{fixed} parameter~$\mub \geq 0$ the bound
$$
\big\lVert
\Riemann(t,\cdot)
\big\rVert_{
	L^2
}
\lesssim
e^{-\mu t}
\big\lVert
\Riemann(0,\cdot)
\big\rVert_{
	L^2
}
\quad
\text{for all}
\quad
t \in \mathbb{R}^+_0.
$$

\begin{remark}\label{Remark1}
	The conditions~\eqref{A2}~--~\eqref{A1} are coupled by the parameter~$\mub\geq 0$, which enters the weights of the Lyapunov function. More precisely, 
	it has been shown in~\cite[Th.~2.3.5]{StephansDiss} for linear boundary controls, imposed by a matrix~$\bar{\mathcal{B}} \in\mathbb{R}^{2n\times 2n}$,
	that the inequality~\eqref{A2} is satisfied if  the condition
	\begin{equation}\label{StephansDiss}
		\textup{exp} \left(  \frac{\mub L}{2 \lambda_{\min} }  \right)
		\big\lVert \bar{\mathcal{B}} \big\rVert_2 
		\leq
		1
		\quad\text{holds for}\quad
		\lambda_{\min} \coloneqq
		\min_{x \in [0,L]}
		\Big\{ \big| \Lambda^\pm(x) \big| \Big\}.
	\end{equation}
	Hence, a \emph{small} value of~$\mub\geq 0$ is desirable. On the other hand, a \emph{large} value may be necessary to make the conditions~\eqref{A1tilde} and~\eqref{A1} hold. This ambiguity reflects the fact that some systems are not even stabilizable unless the length~$L>0$ is sufficiently small~\cite{Bastin2011,Gugat2019}.
\end{remark}


	\begin{remark}[{Stabilization of~$H^1$-solutions according to~\cite[Th.~10.2]{Petit}}] 
		Provided that initial data are sufficiently close to a steady state, i.e.~condition~\eqref{IVsmooth} is satisfied, 
		exponential stability holds with respect to the $H^1$-norm if the matrix
		$$
		\mub\,
		W(x;\mub) 
		+
		W(x;\mub) \boldsymbol{\bar{\mathcal{G}}}(x)
		+
		\boldsymbol{ \bar{\mathcal{G}}}(x)^\T W(x;\mub)
		$$ 
		is strictly positive definite for all~$x\in[0,L]$ and the matrix
		$$
		\boldsymbol{\bar{\mathcal{B}}}^{\T} 
		\begin{pmatrix}
			W^+_0(\mub) \Lambda^+_0 \\ & W^-_L(\mub) \big| \Lambda^-_L \big|  
		\end{pmatrix}
		\boldsymbol{\bar{\mathcal{B}}}
		-
		\begin{pmatrix}
			W^+_L(\mub) \Lambda^+_L \\ & W^-_L(\mub) \big| \Lambda^-_L \big|  
		\end{pmatrix}
		$$
		is negative semidefinite, 
		where~$
		\boldsymbol{\bar{\mathcal{B}}}
		=
		\D_{\Riemann} \mathcal{B}(\Riemann) \big|_{\Riemann=0}
		$
		and 
		$
		\boldsymbol{\bar{\mathcal{G}}}(x)
		=
		\bar{\mathcal{G}}(0;x)
		$ 
		denote linearizations at steady  state. 
		Hence, assumptions~\eqref{A2} and~\eqref{A1tilde} are \emph{more restrictive} as they must be satisfied also apart from the steady state. 	
		
	\end{remark}

	\noindent
	The stabilization concept with respect to the $L^2$-norm, which we follow in this work, does not serve as a general stability result.  However, it 
	comes along with a computational framework that allows for an efficient numerical verification of the conditions~\eqref{A2}~--~\eqref{A1} and hence  allows to investigate numerically problems where initial data may vary widely from steady states. 
	It is justified analytically  in the special cases considered in Section~\ref{SubSectionI} and  Section~\ref{SecLipschitz}. 


\subsection{Linearized case}\label{SubSectionI}
In the linear case, when the source term and the boundary conditions are given by matrices, i.e.~$\bar{\mathcal{G}}\in\mathbb{R}^{2n\times 2n}$ and 
$
\bar{\mathcal{B}}\in\mathbb{R}^{2n\times 2n}
$, 
the Lyapunov function decays exponentially fast if the the matrix~$\Mlinear(x,\mub)=\Mtilde(\Riemann;x,\mub)$ 
is strictly positive definite for all~$x\in[0,L]$ and the matrix
\begin{equation}\label{HLyapunov}
	\Hlinear(\mub) \coloneqq
	\bar{\mathcal{B}}^{\T} 
	\begin{pmatrix}
		W^+_0(\mub) \Lambda^+_0 \\ & W^-_L(\mub) \big| \Lambda^-_L \big|  
	\end{pmatrix}
	\bar{\mathcal{B}}
	-
	\begin{pmatrix}
		W^+_L(\mub) \Lambda^+_L \\ & W^-_L(\mub) \big| \Lambda^-_L \big|  
	\end{pmatrix}
\end{equation}
is negative semidefinite.  
More precisely, the derivative~\eqref{DerivativeLyapunov} is estimated by
\begin{align*}
	&\L'(t;\mub)
	\leq
	-\intL  \Riemann(t,x)^\T \Mlinear(x;\mub) \Riemann(t,x) \d x
	\leq
	-\mu \L(t;\mub)
	\quad
	\Leftrightarrow
	\quad
	\L(t) \leq e^{-\mu t} \L(0) \\
	&
	\text{with decay rate}\quad
	\mu \coloneqq \min_{x \in [0,L]}\Big\{ \smallestEV \Big\{ W^{-\nicefrac{1}{2}}(x;\mub) \Mlinear(x;\mub)W^{-\nicefrac{1}{2}}(x;\mub)  \Big\} \Big\}>0,
\end{align*}
where~$\smallestEV$ denotes the smallest eigenvalue of a matrix. 
An example, used in the following as a benchmark problem, is as follows:

\begin{proposition}\label{Corollary1}
	The linear boundary value problem
	$$
	\begin{aligned}
		\partial_t
		\begin{pmatrix} \Riemann^+(t,x) \\ \Riemann^-(t,x) \end{pmatrix}
		+
		\partial_x
		\begin{pmatrix} \hspace{2.6mm}\Riemann^+(t,x) \\ -\Riemann^-(t,x) \end{pmatrix}
		&=
		\frac{\ST(x)}{2} \begin{pmatrix}
			\hspace{2.5mm} 1 & \hspace{-2.5mm}-1 \\ -1 & 1
		\end{pmatrix}
		\begin{pmatrix} \Riemann^+(t,x) \\ \Riemann^-(t,x) \end{pmatrix},\\
		\begin{pmatrix} \Riemann^+(t,0) \\ \Riemann^-(t,L) \end{pmatrix}
		&=
		\begin{pmatrix}
			&\hspace{-2mm} \kappa \\ \kappa 
		\end{pmatrix}
		\begin{pmatrix} \Riemann^+(t,L) \\ \Riemann^-(t,0) \end{pmatrix}
	\end{aligned}
	$$
	is exponentially stable provided that the inequality
	\begin{equation}\label{ConditionStable}
		\Big|
		\kappa \ln\big(|\kappa|\big)
		\Big| >{  \alpha L} 
		\quad\text{holds for}\quad
		\alpha \coloneqq \max\limits_{x\in[0,L]}
		\Big\{ \big|
		\ST(x)
		\big|  \Big\}
		\quad\text{and}\quad
		|\kappa|\in(0,1).
	\end{equation}
	Furthermore, there exists a value~$|\kappa|\in(0,1)$ in the case~$\alpha L<\nicefrac{1}{e}$.
\end{proposition}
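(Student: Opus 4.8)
The plan is to invoke the linear--stability criterion stated just before the proposition: it suffices to produce a single weight parameter $\mub>0$ for which $\Mlinear(x,\mub)=\Mtilde(\Riemann;x,\mub)$ is strictly positive definite for every $x\in[0,L]$ and the boundary matrix $\Hlinear(\mub)$ from \eqref{HLyapunov} is negative semidefinite; the framework then yields $\L'(t;\mub)\le-\mu\,\L(t;\mub)$ with $\mu=\min_{x}\smallestEV(\Mequivalent)>0$, hence the claimed exponential $L^2$-decay. Since here $\Lambda^+\equiv1$ and $\Lambda^-\equiv-1$, the weights \eqref{Weights} reduce to $W^+(x;\mub)=e^{-\mub x}$ and $W^-(x;\mub)=e^{-\mub(L-x)}$, with boundary values $W^+_0=W^-_L=1$, $W^+_L=W^-_0=e^{-\mub L}$, and product $W^+W^-=e^{-\mub L}$ independent of $x$. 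The source and boundary linearizations are $\bar{\mathcal{G}}(x)=-\tfrac{\ST(x)}{2}\left(\begin{smallmatrix}1&-1\\-1&1\end{smallmatrix}\right)$ and $\bar{\mathcal{B}}=\left(\begin{smallmatrix}0&\kappa\\\kappa&0\end{smallmatrix}\right)$. The whole difficulty is that these two conditions are linked only through $\mub$, so the argument must choose one value of $\mub$ that serves both.

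I would first settle the boundary term. Inserting the boundary weights and $\bar{\mathcal{B}}^{\T}\bar{\mathcal{B}}=\kappa^2 I$ into \eqref{HLyapunov} gives $\Hlinear(\mub)=(\kappa^2-e^{-\mub L})\,I$, which is negative semidefinite precisely when $e^{-\mub L}\ge\kappa^2$, i.e.\ when $\mub\le \tfrac{2|\ln|\kappa||}{L}$ (the right-hand side is positive because $|\kappa|\in(0,1)$).

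Next I would treat positive definiteness. A direct computation gives
\[
\Mlinear(x,\mub)=\begin{pmatrix}(\mub-\ST)\,W^+ & \tfrac{\ST}{2}(W^++W^-)\\ \tfrac{\ST}{2}(W^++W^-) & (\mub-\ST)\,W^-\end{pmatrix},
\]
with $W^\pm=W^\pm(x;\mub)$ and $\ST=\ST(x)$. The diagonal is positive as soon as $\mub>\alpha\ge\ST(x)$. For the determinant I would use $W^+W^-=e^{-\mub L}$ together with the trivial bound $W^\pm\le1$, whence $W^++W^-\le2$ and
\[
\det\Mlinear\ge e^{-\mub L}(\mub-\ST)^2-\ST^2.
\]
Maximizing the destabilizing contribution by taking $\ST=\alpha$ and then using $1\le e^{\mub L/2}$, a sufficient condition for positivity uniform in $x$ is $\mub>2\alpha\,e^{\mub L/2}$. (Equivalently one may pass to $\Mequivalent=W^{-\nicefrac{1}{2}}\Mlinear W^{-\nicefrac{1}{2}}=\mub I-\ST\left(\begin{smallmatrix}1&-c\\-c&1\end{smallmatrix}\right)$ with $c=\cosh\!\big(\tfrac{\mub(L-2x)}{2}\big)$ and bound $\smallestEV(\Mequivalent)\ge\mub-\alpha(1+c)\ge\mub-2\alpha e^{\mub L/2}$.)

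Finally I would balance the two requirements by saturating the boundary condition: set $\mub=\tfrac{2|\ln|\kappa||}{L}$, so that $e^{\mub L/2}=|\kappa|^{-1}$. With this choice $\Hlinear(\mub)\preceq0$ holds, and the positivity condition $\mub>2\alpha e^{\mub L/2}$ becomes $\tfrac{-2\ln|\kappa|}{L}>\tfrac{2\alpha}{|\kappa|}$, i.e.\ $|\kappa\ln|\kappa||>\alpha L$, which is exactly \eqref{ConditionStable}. Thus \eqref{ConditionStable} guarantees a $\mub>0$ for which both hypotheses hold, and exponential stability follows. For the closing assertion I would analyse $\varphi(\kappa):=|\kappa\ln|\kappa||$ on $(0,1)$: it vanishes at both ends and is maximized at $|\kappa|=1/e$ with $\varphi(1/e)=1/e$, so the inequality $|\kappa\ln|\kappa||>\alpha L$ is solvable in $(0,1)$ exactly when $\alpha L<1/e$ (e.g.\ $|\kappa|=1/e$). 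The main obstacle, as noted, is the coupling through $\mub$ — the boundary forces $\mub$ small while positivity forces it large — and the content of the proof is that the saturating choice makes the two constraints meet precisely under \eqref{ConditionStable}.
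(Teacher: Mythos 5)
Your proposal is correct and follows the paper's overall strategy: saturate the boundary constraint at $\mub=\bar{\mu}=-\tfrac{2}{L}\ln|\kappa|$ (the same threshold the paper extracts from the cited result on \eqref{HLyapunov}) and then verify strict positive definiteness of $\Mlinear(x;\bar{\mu})$ under \eqref{ConditionStable}. Where you genuinely differ is in the second step. The paper bounds $\smallestEV\{\Mlinear\}$ from below by subtracting the spectral radius of $W\bar{\mathcal{G}}+\bar{\mathcal{G}}^{\T}W$ and then invokes pointwise estimates on $w^{+}+w^{-}$ and $(w^{+})^{2}+(w^{-})^{2}$ tied to the midpoint $x=\nicefrac{L}{2}$; you instead diagonalize exactly, writing $\Mequivalent=\mub I-\ST\left(\begin{smallmatrix}1&-c\\-c&1\end{smallmatrix}\right)$ with $c=\cosh\!\big(\tfrac{\mub(L-2x)}{2}\big)$, or equivalently run the trace--determinant test using only $W^{\pm}\le 1$ and $W^{+}W^{-}=e^{-\mub L}$. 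This buys a bound $\smallestEV\{\Mequivalent\}\ge\bar{\mu}-2\alpha e^{\bar{\mu}L/2}=\bar{\mu}-2\alpha/|\kappa|$ that is manifestly uniform in $x$ and sidesteps the paper's delicate intermediate weight estimates; this is a real advantage, since $w^{+}+w^{-}=e^{-\bar{\mu}x}+e^{-\bar{\mu}(L-x)}$ is convex in $x$ and therefore attains its \emph{minimum} $2|\kappa|$ at $x=\nicefrac{L}{2}$ and its maximum $1+\kappa^{2}$ at the endpoints, so your crude bound $w^{+}+w^{-}\le 2$ is the one that is valid everywhere, and the endpoint cells are in fact the critical ones. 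Both routes land on exactly $|\kappa\ln|\kappa||>\alpha L$, and your closing maximization of $\varphi(\kappa)=|\kappa\ln|\kappa||$ at $|\kappa|=\nicefrac{1}{e}$ supplies the final existence assertion, which the paper's proof leaves implicit.
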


\begin{proof}
	According to~\cite[Th.~2.3.5]{StephansDiss}, the matrix~\eqref{HLyapunov} is negative semidefinite for
	$$
	1\geq \textup{exp} \Big( \frac{\mub L}{2}  \Big)
	\big\lVert \bar{\mathcal{B}} \big\rVert_2 
	=
	\textup{exp} \Big( \frac{\mub L}{2}  \Big) |\kappa|
	\quad\Leftrightarrow\quad
	\mub \leq \bar{\mu}\coloneqq
	-\frac{2}{L} \ln\big( {|\kappa|} \big),
	\quad
	|\kappa|\in(0,1).
	$$
	\noindent
	The choice~$\mub=\bar{\mu}$ yields the weights
	$$	
	w^+(x;\bar{\mu})
	=
	e^{-\bar{\mu} x}
	=
	\kappa^{\frac{2x}{L}}
	\quad\text{and}\quad
	w^-(x;\bar{\mu})
	=
	e^{-\bar{\mu} (L-x)}
	=
	\kappa^{\frac{2(L-x)}{L}}.
	$$	
	Furthermore, the maximum of the sum~$w^+(x;\bar{\mu})^p+w^-(x;\bar{\mu})^p$, $p\in\mathbb{N}$ is obtained at~$x=\nicefrac{L}{2}$, which yields the upper  bounds~$w^\pm(x;\bar{\mu})\leq 1$,~$
	w^+(x;\bar{\mu})+w^-(x;\bar{\mu})\leq 2|\kappa|
	$ and~$w^+(x;\bar{\mu})^2+w^-(x;\bar{\mu})^2\leq 2\kappa^2
	$. 
	This gives the  eigenvalue estimate
	\begin{align}
		&\begin{aligned}
			\smallestEV\big\{
			\Mlinear(x;\bar{\mu}) 
			\big\}
			&\geq 
			\bar{\mu} W(x;\bar{\mu})
			-
			\largestEV
			\Big\{ 
			W(x;\bar{\mu})\bar{\mathcal{G}}(x)+\bar{\mathcal{G}}(x)^{\T}W(x;\bar{\mu})
			\Big\}
			\nonumber \\
			&\geq 
			\bar{\mu} W(x;\bar{\mu})
			-
			\alpha
			\sqrt{ \frac{ w^+(x;\bar{\mu})^2+w^-(x;\bar{\mu})^2}{2} } 
			+\alpha
			\frac{ w^+(x;\bar{\mu})+w^-(x;\bar{\mu})}{2} 
		\end{aligned}
		\nonumber \\
		&\Longrightarrow \quad
		\min_{x \in [0,L]}\Big\{
		\smallestEV\big\{
		\Mlinear(x;\bar{\mu}) 
		\big\}
		\Big\}
		\geq 
		-\frac{2}{L} \ln\big(|\kappa|\big) \kappa^2
		- 2\alpha |\kappa|, \label{BoundStable}
	\end{align} 
	where~$\largestEV$ denotes the spectral radius. The bound~\eqref{BoundStable} is strictly positive if the condition~\eqref{ConditionStable} holds.


\end{proof}

\subsection{Semilinear case with Lipschitz continuous source term}\label{SecLipschitz}
Similarly to~\cite{Hayat2021}, we consider Lipschitz continuous source terms. 
Then, results of the linearized case can be partially extended 
as shown in the following proposition.

\begin{proposition}\label{Corollary2}
	The semilinear boundary value problem
	$$
	\begin{aligned}
		\partial_t \Riemann(t,x)
		+
		\Lambda(x)
		\partial_x \Riemann(t,x)
		&=-
		\mathcal{G} \big( \Riemann(t,x);x \big) \\
		\begin{pmatrix} \Riemann^+(t,0) \\ \Riemann^-(t,L) \end{pmatrix}
		&=
		\begin{pmatrix}
			&\hspace{-2mm} \kappa \\ \kappa 
		\end{pmatrix}
		\begin{pmatrix} \Riemann^+(t,L) \\ \Riemann^-(t,0) \end{pmatrix}
	\end{aligned}
	$$
	with Lipschitz continuous source term
	$$
	\big\lVert
	\mathcal{G}(\Riemann;x) 
	\big\rVert_2
	\leq C_{\mathcal{G}} \big\lVert
	\Riemann
	\big\rVert_2
	\quad\text{for all}\quad
	x\in[0,L]
	$$
	is exponentially stable provided that 	$\big|\kappa \ln(|\kappa|)\big|
		>
		L C_{\mathcal{G}}$  and~$|\kappa|\in(0,1)$ holds.
\end{proposition}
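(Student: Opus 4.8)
The plan is to follow the proof of Proposition~\ref{Corollary1} for the boundary term and to replace its explicit eigenvalue estimate of the source matrix by the Lipschitz bound in the interior. Since the feedback law is the \emph{linear} map $\bar{\mathcal{B}}=\bigl(\begin{smallmatrix}&\kappa\\\kappa&\end{smallmatrix}\bigr)$ with $\lVert\bar{\mathcal{B}}\rVert_2=|\kappa|$ and the steady state is compatible with it, the affine parts of $\mathcal{B}$ cancel and the boundary term $\H$ coincides with the one of the linearized problem. I would therefore first fix $\mub=\bar\mu\coloneqq-\tfrac{2}{L}\ln(|\kappa|)$, the largest value admitted by~\eqref{StephansDiss}, so that \cite[Th.~2.3.5]{StephansDiss} gives $\H\bigl(\Riemann^+_L,\Riemann^-_0;\bar\mu\bigr)\leq0$ and assumption~\eqref{A2} is secured.

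With~\eqref{A2} in force, the derivative~\eqref{DerivativeLyapunov} satisfies $\L'(t;\bar\mu)\leq-\Rayleigh[\Riemann,\bar\mu](t)\,\L(t;\bar\mu)$, so it remains to bound $\Rayleigh$ from below and verify~\eqref{A1}. Inserting $\mathcal{G}(\Riemann;x)=\bar{\mathcal{G}}(\Riemann;x)\Riemann$ into $\Mtilde$ gives the pointwise identity $\Riemann^\T\Mtilde(\Riemann;x,\bar\mu)\Riemann=\bar\mu\,\Riemann^\T W(x;\bar\mu)\Riemann+2\,\Riemann^\T W(x;\bar\mu)\mathcal{G}(\Riemann;x)$. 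Hence, writing $\Riemann^{(\bar\mu)}=W^{\nicefrac{1}{2}}\Riemann$ so that $\lVert\Riemann^{(\bar\mu)}\rVert_{L^2}^2=\L(t;\bar\mu)$, the Cauchy--Schwarz inequality in the $W$-weighted inner product gives
\[
\Rayleigh[\Riemann,\bar\mu]
=\bar\mu+\frac{2}{\L(t;\bar\mu)}\intL\Riemann^\T W(x;\bar\mu)\,\mathcal{G}(\Riemann;x)\,\d x
\geq\bar\mu-\frac{2\,\bigl\lVert W^{\nicefrac{1}{2}}\mathcal{G}\bigr\rVert_{L^2}}{\bigl\lVert\Riemann^{(\bar\mu)}\bigr\rVert_{L^2}}.
\]

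The crux is to control the weighted source norm by the \emph{un}weighted Lipschitz bound $\lVert\mathcal{G}(\Riemann;x)\rVert_2\leq C_{\mathcal{G}}\lVert\Riemann\rVert_2$. Two points need care. First, one must keep the integrated quantity $\Riemann^\T W\mathcal{G}$, i.e.\ the action of $\bar{\mathcal{G}}$ on the \emph{actual} state, since the hypothesis controls $\mathcal{G}(\Riemann;x)=\bar{\mathcal{G}}(\Riemann;x)\Riemann$ but not the operator norm of $\bar{\mathcal{G}}(\Riemann;x)$, so a pointwise eigenvalue estimate of $\Mequivalent$ is not available. Second, estimating $W^+(\mathcal{G}^+)^2+W^-(\mathcal{G}^-)^2\leq\max\{W^+,W^-\}\lVert\mathcal{G}\rVert_2^2$ and $W^+(\Riemann^+)^2+W^-(\Riemann^-)^2\geq\min\{W^+,W^-\}\lVert\Riemann\rVert_2^2$ yields
\[
\frac{\bigl\lVert W^{\nicefrac{1}{2}}\mathcal{G}\bigr\rVert_{L^2}}{\bigl\lVert\Riemann^{(\bar\mu)}\bigr\rVert_{L^2}}
\leq C_{\mathcal{G}}\Bigl(\sup_{x\in[0,L]}\frac{\max\{W^+(x;\bar\mu),W^-(x;\bar\mu)\}}{\min\{W^+(x;\bar\mu),W^-(x;\bar\mu)\}}\Bigr)^{\nicefrac{1}{2}},
\]
and the decisive observation is that for the weights of Proposition~\ref{Corollary1} one has $W^+(x;\bar\mu)W^-(x;\bar\mu)\equiv\kappa^2$, with the ratio $\max/\min$ maximal at the endpoints $x\in\{0,L\}$, where it equals $\kappa^{-2}$. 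Thus the right-hand side is $C_{\mathcal{G}}/|\kappa|$; keeping the square root (factor $|\kappa|^{-1}$ rather than $\kappa^{-2}$) is exactly what makes the resulting threshold coincide with that of the linear case, and is the step where the estimate must be sharp.

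Collecting the bounds gives $\Rayleigh[\Riemann,\bar\mu]\geq\bar\mu-\tfrac{2C_{\mathcal{G}}}{|\kappa|}=-\tfrac{2}{L}\ln(|\kappa|)-\tfrac{2C_{\mathcal{G}}}{|\kappa|}$ uniformly in $t$, and this is strictly positive precisely when $|\kappa\ln(|\kappa|)|>LC_{\mathcal{G}}$. Under this hypothesis assumption~\eqref{A1} holds with $\mu\coloneqq-\tfrac{2}{L}\ln(|\kappa|)-\tfrac{2C_{\mathcal{G}}}{|\kappa|}>0$, whence $\L'(t;\bar\mu)\leq-\mu\,\L(t;\bar\mu)$ and $\L(t;\bar\mu)\leq e^{-\mu t}\L(0;\bar\mu)$; the norm equivalence $\lVert\Riemann\rVert_2^2\sim\Riemann^\T W\Riemann$ then yields $\lVert\Riemann(t,\cdot)\rVert_{L^2}\lesssim e^{-\mu t/2}\lVert\Riemann(0,\cdot)\rVert_{L^2}$, i.e.\ exponential stability.
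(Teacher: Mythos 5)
Your proof is correct and follows essentially the same route as the paper: fix $\mub=\bar\mu=-\tfrac{2}{L}\ln(|\kappa|)$ so that the boundary term is dissipative, then combine the Lipschitz bound with the weight bounds $\kappa^2\leq w^\pm(x;\bar\mu)\leq 1$ to control the source contribution by $\tfrac{2C_{\mathcal{G}}}{|\kappa|}$ times the weighted norm, yielding the threshold $|\kappa\ln(|\kappa|)|>LC_{\mathcal{G}}$. The only (cosmetic) difference is that you verify the integrated Rayleigh-quotient condition~\eqref{A1} via an $L^2$ Cauchy--Schwarz argument, whereas the paper establishes the pointwise positivity of the quadratic form $\Riemann^\T\Mtilde\Riemann$, which implies your bound.
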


\begin{proof}
		As shown in~Proposition~\ref{Corollary1},   the boundary conditions~\eqref{HLyapunov} require the bound ${
		\mub \leq \bar{\mu}\coloneqq
		-\nicefrac{2}{L} \ln\big( {|\kappa|} \big). 
	}$	
	Hence, the weights read as~$w^{\pm}(x;\bar{\mu})
	=
	\kappa^{\frac{2x}{L}}
	\in
	\big[ \kappa^2,1 \big] 
	$ 
	and the Lipschitz continuity of the source term implies
	$$
	\left\lVert
	\mathcal{G}\big(\Riemann(t,x);x\big) 
	\right\rVert_{W(x)}
	\leq
	C_{\mathcal{G}}
	\big\lVert
	\Riemann(t,x) 
	\big\rVert_{2}
	\leq
	\frac{ C_{\mathcal{G}} }{|\kappa|} 
	\big\lVert
	\Riemann(t,x) 
	\big\rVert_{W(x)}.
	$$
	Then, the claim follows from the assumption~$\big|\kappa \ln(|\kappa|)\big|
		>
		L C_{\mathcal{G}}$ and the estimate
		\begin{align*}
			\Riemann(t,x)^\T
			\Mtilde\big(\Riemann(t,x);x,\bar{\mu}\big)
			\Riemann(t,x)
			&=
			\mub\,
			\big\lVert \Riemann(t,x) \big\rVert_{W(x)} 
			+2
			\left\langle
			\Riemann(t,x), 
			\mathcal{G}\big(\Riemann(t,x);x\big)
			\right\rangle_{W(x)} \\
			&\geq
			\left[\bar{\mu}- \frac{ 2C_{\mathcal{G}} }{|\kappa|}  \right]
			\big\lVert
			\Riemann(t,x) 
			\big\rVert_{W(x)}^2
			>0
			\ \ \text{for}\ \
			\Riemann(t,x)\neq 0. 
	\end{align*}	
\end{proof}

\noindent
Finally, we~remark that the regularity assumption~\eqref{ToStrongRegularity}, which has been used to deduce the previous results, can be stated in terms of~$L^2$-solutions for general linear balance laws~\cite[Sec.~2.1.3]{O1} and for  semilinear systems with locally Lipschitz continuous source term~\cite[Th.~1]{Prieur2018}. The following computational framework is based on these $L^2$-solutions, i.e.~$
\u 
\in 
C^0\left(
[0,T);
L^2\big( (0,L);\mathbb{R}^{2n} \big)
\right)
$, 
which allow to consider discontinuities and initial data that may vary widely from steady states.

\section{Computational framework}\label{ComputationalFramework}
Since analytical results are \emph{in general} not available for   semilinear systems, we introduce a computational framework that  is based on a central, weighted, essentially non-oscillatory (CWENO) reconstruction.  The aim is to find a control law such that there exists a parameter~$\mub\geq 0$  that satisfies the conditions~\eqref{A2},~\eqref{A1tilde} and~\eqref{A1}, respectively. 


\subsection{High-order discretization inside the spatial domain}\label{SecCWENO}
A desirable numerical scheme should approximate at high-order not only the semilinear system~\eqref{IBVP1} with space-depending source term and the boundary conditions~\eqref{IBVP2}, but also the conditions~~\eqref{A2},~\eqref{A1tilde} and~\eqref{A1}. 
To this end, we use a finite-volume based CWENO reconstruction.
The spatial domain~$[0,L]$ is divided into $N$~cells~$\mathbb{C}_j\coloneqq\big[x_{j-\nicefrac{1}{2}}, x_{j+\nicefrac{1}{2}} \big]$ for ${j=1,\ldots,N}$ by a space discretization~${\Delta x > 0}$ satisfying ${\Delta x N = L}$. The cell centers are~${x_j \coloneqq (j-\nicefrac{1}{2}) \Delta x}$ and the  cell edges are~${x_{j+{\nicefrac{1}{2}}} \coloneqq j \Delta x}$. 
The evolution of cell averages
$$
\Rbar_j(t)\coloneqq 
\frac{1}{\Delta x}
\int\limits_{x_{j-{\nicefrac{1}{2}}}}^{x_{j+{\nicefrac{1}{2}}}} 
\Riemann(t,x)
\d x
$$
for a general balance law
${\Riemann_t + \tilde{f}(\Riemann)_x=-\tilde{\mathcal{G}}(\Riemann;x)}$
is described by the ordinary differential equation
\begin{equation*}
	\frac{\textup{d}}{\textup{d} t}\Rbar_j(t)=
	- \frac{1}{\Delta x} \bigg[ \tilde{f}\Big( \Riemann(t,x_{j+\nicefrac{1}{2}}) \Big) - \tilde{f}\Big( \Riemann(t,x_{j-\nicefrac{1}{2}})\Big) \bigg]
	-
	\frac{1}{\Delta x}
	\int\limits_{x_{j-{\nicefrac{1}{2}}}}^{x_{j+{\nicefrac{1}{2}}}} 
	\tilde{\mathcal{G}}\Big( \Riemann(t,x);x \Big)
	\d x .
\end{equation*}

\noindent
Here, the linear PDE~\eqref{IBVP1} is written in conservative form by defining
$$
\tilde{f}\Big( \Riemann(t,x) \Big)
\coloneqq
\Lambda(x)  \Riemann(t,x) 
\quad
\text{and}
\quad
\tilde{\mathcal{G}}\Big( \Riemann(t,x);x \Big)
\coloneqq
\mathcal{G}\Big( \Riemann(t,x);x \Big)
-
\partial_x
\Lambda(x)  \Riemann(t,x).
$$
Furthermore, the  central, weighted, essentially non-oscillatory (CWENO) reconstruction from~\cite{Visconti2018} is applied for the interior cells~$j=2,\ldots,N-1$.  
A third-order reconstruction is of the form
\begin{equation}\label{CWENO}
	\textup{CWENO} 
	\ : \ 
	\big[ \Rbar_{j-1},\Rbar_{j},\Rbar_{j+1} \big]
	\ \mapsto \
	\poly_j(x),
\end{equation}
where $\poly_j(x)$ denotes a  reconstruction polynomial defined for~$x\in\mathbb{C}_j$. 
The reconstruction for the semi-discretization at the right ($r$) side of a cell interface is denoted by~${
	\caSD^r_{j-\nicefrac{1}{2}}(t)
	\coloneqq
	\poly_j(x_{j-\nicefrac{1}{2}})
}$, 
at the left~($\ell$) side by~${
	\caSD^\ell_{j+\nicefrac{1}{2}}(t)
	\coloneqq
	\poly_j(x_{j+\nicefrac{1}{2}})
}$ and at the cell center~($c$) by~${
	\caSD^c_{j}(t)
	\coloneqq
	\poly_j(x_{j})
}$. 
The source term is discretized by the Gauss-Lobatto rule with three quadrature nodes.
Then, the resulting semi-discretization of the balance law with the upwind flux~$\NumFlux$ reads as
\begin{align*}
	\frac{\textup{d}}{\textup{d} t}\Rbar_j(t)=
	&- \frac{1}{\Delta x} \bigg[ \NumFlux \Big( \caSD^\ell_{j+\nicefrac{1}{2}}(t),\caSD^r_{j+\nicefrac{1}{2}}(t) \Big) 
	- 
	\NumFlux\Big( 
	\caSD^\ell_{j-\nicefrac{1}{2}}(t),\caSD^r_{j-\nicefrac{1}{2}}(t) 
	\Big)  \Bigg] \\
	&-
	\frac{1}{6}
	\Big[
	\mathcal{G} \big( \caSD^r_{j-\nicefrac{1}{2}}(t) ;x_{j-\nicefrac{1}{2}}\big)
	+ 4
	\mathcal{G} \big( \caSD^c_{j}(t) ;x_j\big)
	+
	\mathcal{G} \big( \caSD^\ell_{j+\nicefrac{1}{2}}(t) ;x_{j+\nicefrac{1}{2}}\big)
	\Big] \\
	&+\  \mathcal{O}\big(\Delta x^3\big).
\end{align*}
It is approximated in time with a strong stability preserving (SSP) Runge-Kutta
method with three stages~\cite{Jiang1996}.

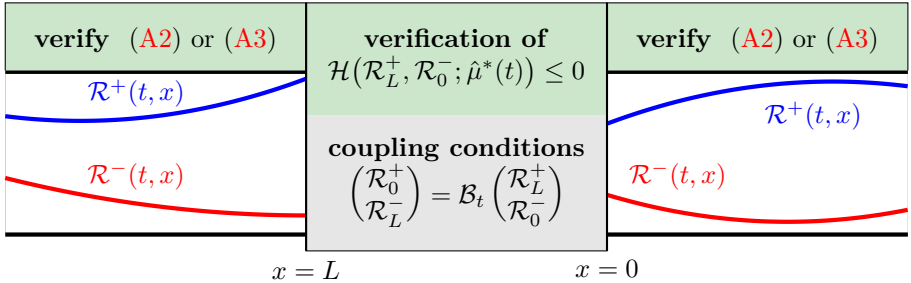
\begin{figure}[H]
	\begin{center}

\tikzstyle{block} = [thick,draw, fill=blue!20, rectangle, 
minimum height=3em, minimum width=3em]
\tikzstyle{Kreis} = [thick,draw, fill=blue!20, circle, 
minimum height=3em, minimum width=3em]
\tikzstyle{sum} = [draw, fill=darkgreen!20, rectangle, 
minimum height=2em, minimum width=3em]
\tikzstyle{input} = [coordinate]
\tikzstyle{output} = [coordinate]
\tikzstyle{pinstyle} = [ pin edge={to-,thick,black}]
\tikzstyle{vecArrow} = [ultra thick,double distance=60pt]

\hspace*{-1.4cm}
\begin{tikzpicture}[scale=1]

	\draw [vecArrow] (0,0) -- (4,0);
	\draw [vecArrow] (8,0) -- (12,0);
	
	\begin{scope}[xshift=-0.4cm,yshift=-1.7cm]
		
		\begin{axis}[scale=0.7,
			domain=-0.5:0.5,
			xticklabels={,,},
			yticklabels={,,},
			samples=100,
			ymin=-2,ymax=2,
			hide x axis,
			hide y axis,    
			axis line style={draw opacity=0}],
			
			\addplot [color=blue, ultra thick] {      x^2 + 0.5*x + 0.2}; 
			\addplot [color=red, ultra thick] {      0.5*x^2 + -0.5*x + -1}; 
		\end{axis}
		
	\end{scope}
	
	\begin{scope}[xshift=7.6cm,yshift=-1.7cm]
		
		\begin{axis}[scale=0.7,
			domain=-0.5:0.5,
			xticklabels={,,},
			yticklabels={,,},
			samples=100,
			ymin=-2,ymax=2,
			hide x axis,
			hide y axis,    
			axis line style={draw opacity=0}],
			
			\addplot [color=blue, ultra thick] { -x^2 + 0.5*x + 0.6}; 
			\addplot [color=red, ultra thick] { 1*x^2 - 0.2*x - 1.2}; 
		\end{axis}
		
	\end{scope}

	\filldraw[fill=gruen!20]  (4,0.5) -- (4,2) -- (8,2) -- (8,0.5);
	\filldraw[fill=black!10]  (4,0.5) -- (4,-1.3) -- (8,-1.3) -- (8,0.5);
	\filldraw[fill=gruen!20]  (0,+1.1) -- (0,+2) -- (4,+2) -- (4,+1.1);
	\filldraw[fill=gruen!20]  (8,+1.1) -- (8,+2) -- (12,+2) -- (12,+1.1);
	
	\node [align=center] at (6,-0.4) {\textbf{coupling conditions}\\
		$
		\begin{pmatrix} \Riemann^+_0 \\ \Riemann^-_L \end{pmatrix} 
		=
		\mathcal{B}_t
		\begin{pmatrix} \Riemann^+_L \\ \Riemann^-_0 \end{pmatrix} 
		$};
	
	\node [align=center] at (6,1.26) {\textbf{verification of}\\
		$
		\H\big(\Riemann^+_L,\Riemann^-_0;\mut(t)\big) \leq 0 
		$};
	
	\node [align=center] at (2,1.5) {\textbf{verify }
		\eqref{A1tilde}~or~\eqref{A1}};
	
	\node [align=center] at (1.8,0.8) {\color{blue} $\Riemann^+(t,x)$ };
	\node [align=center] at (1.8,-0.3) {\color{red} $\Riemann^-(t,x)$ };

	\node [align=center] at (10.8,0.5) {\color{blue} $\Riemann^+(t,x)$ };
	\node [align=center] at (9,-0.3) {\color{red} $\Riemann^-(t,x)$ };
	
	\node [align=center] at (10,1.5) {\textbf{verify }
		\eqref{A1tilde}~or~\eqref{A1}};

	\draw[thick] (4,2) -- (4,-1.3)		node[below] {$x=L$};
	\draw[thick] (8,2) -- (8,-1.3)		node[below] {$x=0$};

\end{tikzpicture}

		\caption{Continuous setting for the computation of stabilizing boundary controls~$\mathcal{B}_t$.}
		\label{FigContinuous}
	\end{center}
\end{figure}

\subsection{High-order discretization at boundaries}
Figure~\ref{FigContinuous} illustrates the continuous setting to obtain stabilizing boundary controls.  
As  mentioned in~Remark~\ref{Remark1}, 
the parameter~$\mub\geq 0$ should be chosen as \emph{small} as possible, since a higher value restricts the choice of boundary conditions. To ensure a  given decay rate~$\mu>0$ we define the possibly time-depending parameter ${
	\mut(t) \in \big\{
	\mut_{\M}(t),\mut_{\Rayleigh}(t)
	\big\}
}$ 
by
\begin{equation}\label{Defmut}
	\begin{aligned}
		\mut_{\M}(t)
		&\coloneqq
		\inf
		\Big\{
		\mub>0 \ \Big| \
		\smallestEV\left\{
		\M\big(\Riemann(t,x);x;\mub\big)
		\right\}
		\geq \mu>0 \text{ for all }x\in[0,L]
		\Big\}, \\
		\mut_{\Rayleigh}(t)
		&\coloneqq
		\inf
		\Big\{
		\mub>0 \ \Big| \
		\Rayleigh\big[
		\Riemann,\mub
		\big](t)
		\geq
		\mu> 0
		\Big\}.
	\end{aligned}
\end{equation}


\noindent
Since this parameter enters the weights $W(x;\mut(t))$ of the Lyapunov function, the condition~$\H\big(\Riemann^+_L,\Riemann^-_0;\mut(t)\big)\leq 0$ is time-dependent as well. This may require also time-dependent coupling conditons~$\mathcal{B}_t$.

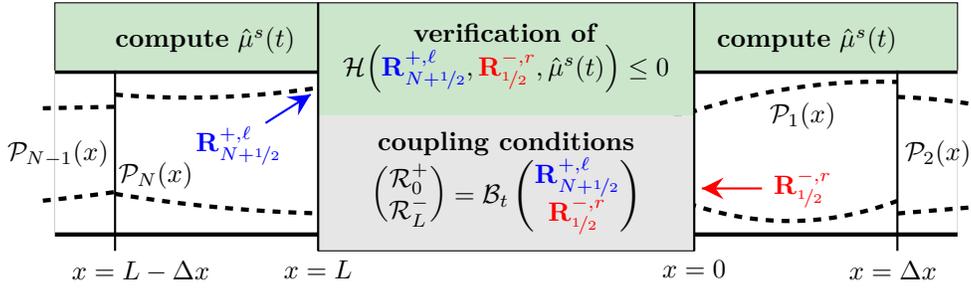
\begin{figure}[H]
	\begin{center}

\tikzstyle{block} = [thick,draw, fill=blue!20, rectangle, 
minimum height=3em, minimum width=3em]
\tikzstyle{Kreis} = [thick,draw, fill=blue!20, circle, 
minimum height=3em, minimum width=3em]
\tikzstyle{sum} = [draw, fill=darkgreen!20, rectangle, 
minimum height=2em, minimum width=3em]
\tikzstyle{input} = [coordinate]
\tikzstyle{output} = [coordinate]
\tikzstyle{pinstyle} = [ pin edge={to-,thick,black}]
\tikzstyle{vecArrow} = [ultra thick,double distance=60pt]

\hspace*{-7mm}
\begin{tikzpicture}[scale=1]

	\draw [vecArrow] (0,0) -- (3.5,0);
	\draw [vecArrow] (8.5,0) -- (12,0);
	
	\begin{scope}[xshift=-0.4cm,yshift=-1.7cm]
		
		\begin{axis}[scale=0.7,
			domain=-0.25:0.45,
			xticklabels={,,},
			yticklabels={,,},
			samples=100,
			ymin=-2,ymax=2,
			xmin=-0.5,xmax=0.5, 
			hide x axis,
			hide y axis,    
			axis line style={draw opacity=0}],
			
			\addplot [color=black, ultra thick, dashed] {      1*x^2 + 0.1*x + 0.45}; 
			\addplot [color=black, ultra thick,dashed] {      0.6*x^2 + -0.5*x + -1}; 
		\end{axis}
		
	\end{scope}
	
	\begin{scope}[xshift=-0.4cm,yshift=-1.7cm]
		
		\begin{axis}[scale=0.7,
			domain=-0.45:-0.25,
			xticklabels={,,},
			yticklabels={,,},
			samples=100,
			ymin=-2,ymax=2,
			xmin=-0.5,xmax=0.5, 
			hide x axis,
			hide y axis,    
			axis line style={draw opacity=0}],
			
			\addplot [color=black, ultra thick, dashed] {       + 0.1*x + 0.35}; 
			\addplot [color=black, ultra thick,dashed] {      0.6*x^2 + 1*x + -0.6}; 
		\end{axis}
		
	\end{scope}
	
	\begin{scope}[xshift=7.6cm,yshift=-1.7cm]
		
		\begin{axis}[scale=0.7,
			domain=-0.45:0.25,
			xticklabels={,,},
			yticklabels={,,},
			samples=100,
			ymin=-2,ymax=2,
			xmin=-0.5,xmax=0.5,
			hide x axis,
			hide y axis,    
			axis line style={draw opacity=0}],
			
			\addplot [color=black, ultra thick,dashed] { -1.5*x^2 + 0.6*x + 0.6}; 
			\addplot [color=black, ultra thick,dashed] { 3*(x+0.049)^2 - 0*x - 1.2}; 
		\end{axis}
		
	\end{scope}

	\begin{scope}[xshift=7.6cm,yshift=-1.7cm]
		
		\begin{axis}[scale=0.7,
			domain=0.25:0.45,
			xticklabels={,,},
			yticklabels={,,},
			samples=100,
			ymin=-2,ymax=2,
			xmin=-0.5,xmax=0.5,
			hide x axis,
			hide y axis,    
			axis line style={draw opacity=0}],
			
			\addplot [color=black, ultra thick,dashed] { -1.5*x^2 + 0.6*x + 0.4}; 
			\addplot [color=black, ultra thick,dashed] {  0.4*x - 1.2}; 
		\end{axis}
		
	\end{scope}

	c
	\filldraw[fill=gruen!20]  (3.5,0.5) -- (3.5,2) -- (8.5,2) -- (8.5,0.5);
	\filldraw[fill=black!10]  (3.5,0.5) -- (3.5,-1.3) -- (8.5,-1.3) -- (8.5,0.5);
	\filldraw[fill=gruen!20]  (0,+1.1) -- (0,+2) -- (3.5,+2) -- (3.5,+1.1);
	\filldraw[fill=gruen!20]  (8.5,+1.1) -- (8.5,+2) -- (12,+2) -- (12,+1.1);
	
	\node [align=center] at (6,-0.4) {\textbf{coupling conditions}\\
		$
		\begin{pmatrix} \Riemann^+_0 \\ \Riemann^-_L \end{pmatrix} 
		=
		\mathcal{B}_t
		\begin{pmatrix}{ \color{blue} \caSD^{+,\ell}_{N+\nicefrac{1}{2}}} \\ {\color{red} \caSD^{-,r}_{\nicefrac{1}{2}} } \end{pmatrix} 
		$};
	
	\node [align=center] at (6,1.26) {\textbf{verification of}\\
		$
		\H\Big({\color{blue} \caSD^{+,\ell}_{N+\nicefrac{1}{2}}},{\color{red} \caSD^{-,r}_{\nicefrac{1}{2}} }, \mud(t) \Big)\leq 0
		$};
	
	\node [align=center] at (2,1.5) {\textbf{compute}
		$\mud(t)$};

	\node [align=center] at (2.5,0.1) {\color{blue} $\caSD^{+,\ell}_{N+\nicefrac{1}{2}}$ };

	
	\node [align=center] at (11.8,0) {\color{black} $\poly_{2}(x)$ };
	\node [align=center] at (0.1,0) {\color{black} $\poly_{N-1}(x)$ };

	\node [align=center] at (1.4,-0.3) {\color{black} $\poly_N(x)$ };
	\node [align=center] at (10,0.5) {\color{black} $\poly_1(x)$ };
	
	\node [align=center] at (10,-0.47) {\color{red} $\caSD^{-,r}_{\nicefrac{1}{2}}$ };
	
	
	\draw[thick,red,decoration={markings,mark=at position 1 with	{\arrow[scale=2,>=stealth]{>}}},postaction={decorate}]  (9.4,-0.47) -- (8.6,-0.47);

	\draw[thick,blue,decoration={markings,mark=at position 1 with	{\arrow[scale=2,>=stealth]{>}}},postaction={decorate}]  (2.8,0.4) -- (3.4,0.78);
	

	\node [align=center] at (10,1.5) {\textbf{compute}
		$\mud(t)$};

	\draw[thick] (3.5,2) -- (3.5,-1.3)		node[below] {$x=L$};
	\draw[thick] (8.5,2) -- (8.5,-1.3)		node[below] {$x=0$};
	\draw[thick] (0.8,1.1) -- (0.8,-1.3)		node[below] {\ \; \quad $x=L-\Delta x$};
	\draw[thick] (11.2,1.1) -- (11.2,-1.3)		node[below] {$x=\Delta x$  \quad};

\end{tikzpicture}

		\caption{Semi-discrete setting for the computation of stabilizing boundary controls~$\mathcal{B}_t$.}
		\label{FigDiscrete}
	\end{center}
\end{figure}

\noindent
Figure~\ref{FigDiscrete} illustrates the discretized setting. Therein, the reconstruction polynomials~$\poly_j(x)$ are black dotted. Since the CWENO reconstruction~\eqref{CWENO} requires the central stencil~$\mathbb{C}_{j-1},\ldots,\mathbb{C}_{j+1}$ to reconstruct the polynomial~$\poly_j(x)$ it can be only used for interior cells. The cells~$\mathbb{C}_0, \mathbb{C}_N$, which are adjacent to the boundary, need a special treatment. Therein,   reconstructions of the form
\begin{alignat*}{8} 
	&\textup{CWENOb}^{(r)}
	\ : \ 
	\big[ \Rbar_{N-2},\Rbar_{N-1},\Rbar_{N} \big]
	&&\ \mapsto \
	\poly_N(x), \\
	&\textup{CWENOb}^{(\ell)} 
	\ : \ 
	\big[ \Rbar_{1},\Rbar_{2},\Rbar_{3} \big]
	&&\ \mapsto \
	\poly_1(x)
\end{alignat*}
are applied~that have been recently introduced by~\cite[Semplice, Travaglia,  Puppo]{Elena}. 
A crucial property of these  reconstruction are polynomials that  are defined within the whole cell. This  allows to determine the expressions~\eqref{Defmut} at high order, i.e.
\begin{align*}
	\mud_{\M}(t)
	&\coloneqq
	\inf
	\Big\{
	\mub>0 \ \Big| \
	\smallestEV\left\{
	\Mtilde\big(\poly_j(x_k);x_k;\mub\big)
	\right\} \geq \mu \\
	&\qquad\quad
	\text{ for all } \ j=1,\ldots,N, \ 
	\text{ and }
	k\in\{j\pm\nicefrac{1}{2},j \}
	\Big\}, \\
	\mud_{\Rayleigh}(t)
	&\coloneqq
	\inf
	\bigg\{
	\mub>0 \ \bigg| \
	\sum\limits_{j=1}^N
	\RayleighQ\big[
	\poly_j,\mub
	\big](t)
	\geq
	\mu> 0
	\bigg\},
\end{align*}
where~$\RayleighQ\big[
\poly_j,\mub
\big]$ denotes the discretized weighted Rayleigh quotient~\eqref{Rayleigh} that is obtained by the Gauss-Lobatto  rule with the quadrature nodes~$
\poly_j(x_{j\pm \nicefrac{1}{2} })
$ and~$
\poly_j(x_{j })
$.


Furthermore, the boundary values~$\caSD^{+,\ell}_{N+\nicefrac{1}{2}}  $ and~$\caSD^{-,r}_{\nicefrac{1}{2}}  $ are reconstructed at high order. This is crucial to verify the condition~$\H\big( \caSD^{+,\ell}_{N+\nicefrac{1}{2}},\caSD^{-,r}_{\nicefrac{1}{2}};\mud(t) \big)\leq 0$. 
The  inflow reconstructed values are given by the coupling conditions, i.e.~$
\caSD^{+,\ell}_{\nicefrac{1}{2}}
\coloneqq \Riemann_0^+$ and
$ 
\caSD^{-,r}_{N+\nicefrac{1}{2}} 
\coloneqq \Riemann_L^-. 
$ 

This framework guarantees  parameters~$\mud(t)$ that satisfy the properties~\eqref{A1tilde},~\eqref{A1} and specify  the condition~\eqref{A2}, which ensures dissipative boundary controls. Those can be specified problem and time-dependent, which ensures stabilizing boundary conditions. However, we note  that 
	there do not necessarily exist 
boundary controls with the property~\eqref{A2}. 
 Systems that are not stabilizable, see~e.g.~\cite{Bastin2011,Gugat2019,GugatHerty2020}, serve as examples.  
Hence, only sufficient conditions are obtained, but not necessarily a control rule that steers the system exponentially fast to a desired state.

\begin{remark}\label{RemarkII}
	The presented computational framework is consistent with the previously mentioned theoretical state of research apart from the fact that   the choices~\eqref{Defmut} lead to a numerical Lyapunov function whose parameters are time-dependent, in contrast to the analytical Lyapunov function~\eqref{analyticalLF}. Therefore, the numerical Lyapunov function may be~\emph{not strictly decreasing}, but still leads to an exponential decay of perturbations. Analytical results are recovered by setting~$\mud(t)=\mub$~for all~$t\in\mathbb{R}^+_0$. 
	
	Furthermore, the parameter~$\mud_{\Rayleigh}(t)$, which is based on the weighted Rayleigh quotient, leads to a weaker estimate of the parameter~$\mub$ that enters the Lyapunov function, i.e.~a smaller parameter~$\mub$ is obtained. Hence, the disadvantage of this choice is a control rule that may not be sufficient to establish exponential decay. On the other hand, perturbations from steady states are still damped and  the computational cost is reduced significantly. Namely, the choice~$\mud_{\M}(t)$ requires to solve an optimization problem for each point in space, whereas the choice~$\mud_{\Rayleigh}(t)$ requires the solution of only~\emph{one} optimization problem.
\end{remark}

\section{Numerical results}\label{SecNumericalResults}

The  computational framework is applicable for general systems of semilinear hyperbolic balance laws and will be illustrated by means of the Kac-Goldstein equations for chemotaxis with boundary conditions~\eqref{BCKAC}. Since these systems are in general not stabilizable~\cite{Bastin2011,Gugat2019,GugatHerty2020}, we consider the benchmark problems derived in Section~\ref{SectionConditions}.
More precisely, we consider the Kac-Goldstein equations with the following source terms:
\begin{alignat*}{8}
&\textup{Section~\ref{SecTurningI}:}\quad
&&
\mathcal{G} \big( \Riemann(t,x);x \big) 
=
\frac{1}{2e} 
\begin{pmatrix}
1 & -1 \\
-1 & 1
\end{pmatrix}
\Riemann(t,x) \\
&\textup{Section~\ref{SecTurningII}:}\quad
&&
\mathcal{G} \big( \Riemann(t,x);x \big) 
=
\frac{1}{2e} 
\cos\Big(
\Riemann^+(t,x)^2
+
\Riemann^-(t,x)^2
\Big)
\begin{pmatrix}
1 & -1 \\
-1 & 1
\end{pmatrix}
\Riemann(t,x)\\
&\textup{Section~\ref{SecTurningIII}:}\quad
&&
\mathcal{G} \big( \Riemann(t,x);x \big) 
=
\frac{1}{2e} 
\begin{pmatrix}
\Riemann^-(t,x) & \Riemann^+(t,x) \\
\Riemann^+(t,x) & \Riemann^-(t,x)
\end{pmatrix}
\begin{pmatrix}
1 & -1 \\
-1 & 1
\end{pmatrix}
\Riemann(t,x) \\
\end{alignat*}

\noindent
The  nonlinear source term  requires the computational approach of  Section~\ref{ComputationalFramework}. It is a relevant practical example~to model the alignment  of animals
and cells~\cite[Sec.~2]{Lutscher}. Although a small value~$|\kappa|\ll 1$ may damp perturbations faster, a large value is typically of interest. Namely,  larger values can lead to a smaller net flux at the boundary, which in turn results in a more economical control. 
The discretizations
$\Delta x=2^{-5}$ and $\textup{CFL}=0.45$ for~$L=1$ are used in all simulations.  Initial conditions read as~$\Riemann^+(0,x)=\sin(\pi x)$ and~$\Riemann^-(0,x)=\cos(\pi x)$. 
Furthermore, parameters for the CWENO reconstructions are chosen as $Z$-nonlinear weights~as described in~\cite{Elena}.

\subsection{Linear source term}\label{SecTurningI}
The linear source term is a special case of Proposition~\ref{Corollary1} for~${ \theta(x)=\nicefrac{1}{e}}$. 
 The largest parameter for the boundary conditions that is guaranteed in Proposition~\ref{Corollary1} is~$\kappa=\nicefrac{1}{e}$, which is obtained for~$
\mub=
-\nicefrac{2}{L} \ln\big( {|\kappa|} \big)=2
$.

Figure~\ref{Corollary_1_Solution} shows the deviation~$\rho(t,x)-\bar{\rho}$ of the density for~$t\in[0,10]$. If the time-dependent parameter~$\mud_{{\M}(t)}$ is used~(left panels), the obtained boundary control is able to steer the system towards the equilibrium for both decay rates~$\mu=0.1$ and~$\mu=1$. In contrast, the parameter~$\mud_{\mathcal{Q}}(t)$, which is based on the weighted Rayleigh quotient only stabilizes the system provided that the larger desired decay rate~$\mu=1$ is used. 
Figure~\ref{Corollary_1_Lyapunov}  shows the corresponding Lyapunov functions that yield upper bounds for the $L^2$-norm of deviations. More precisely, the scaled Lyapunov function
\begin{equation}\label{LFtilde}
	\widetilde{\L}(t)
	\coloneqq
	W^{-1}_{\min}(t) \L(t) 
	\geq \big\lVert \Riemann \big\rVert_{L^2}
	\ \text{ for } \
	W_{\min}(t)
	\coloneqq
	\min_{x\in[0,L]} \bigg\{
	\smallestEV\Big\{
	W\big(x,\mud(t)\big)
	\Big\}
	\bigg\}
\end{equation}
is shown as blue line and serves as upper bound for the $L^2$-norm that is obtained by simulations with the parameter~$\mud_{{\M}(t)}$, which is shown in blue. They decay exponentially fast with a rate that is larger than the desired  rate~$\mu=0.1$~(left panel) and $\mu=1$~(right panel). 
The $L^2$-norm for deviations that are based on the~parameter~$\mud_{{\mathcal{Q} }(t)}$ is shown in red. 
Deviations even increase for the smaller desired decay rate and decrease for~$\mu=1$, however, with a lower rate.

\begin{figure}[H]
	\begin{minipage}{0.45\textwidth}
		\begin{center}	
			$\boldsymbol{\mud_{\M}(t)}$\textbf{  and }$\boldsymbol{\mu=0.1}$	
		\end{center}
		\vspace{0mm}
		\scalebox{1}{\includegraphics[width=\linewidth]{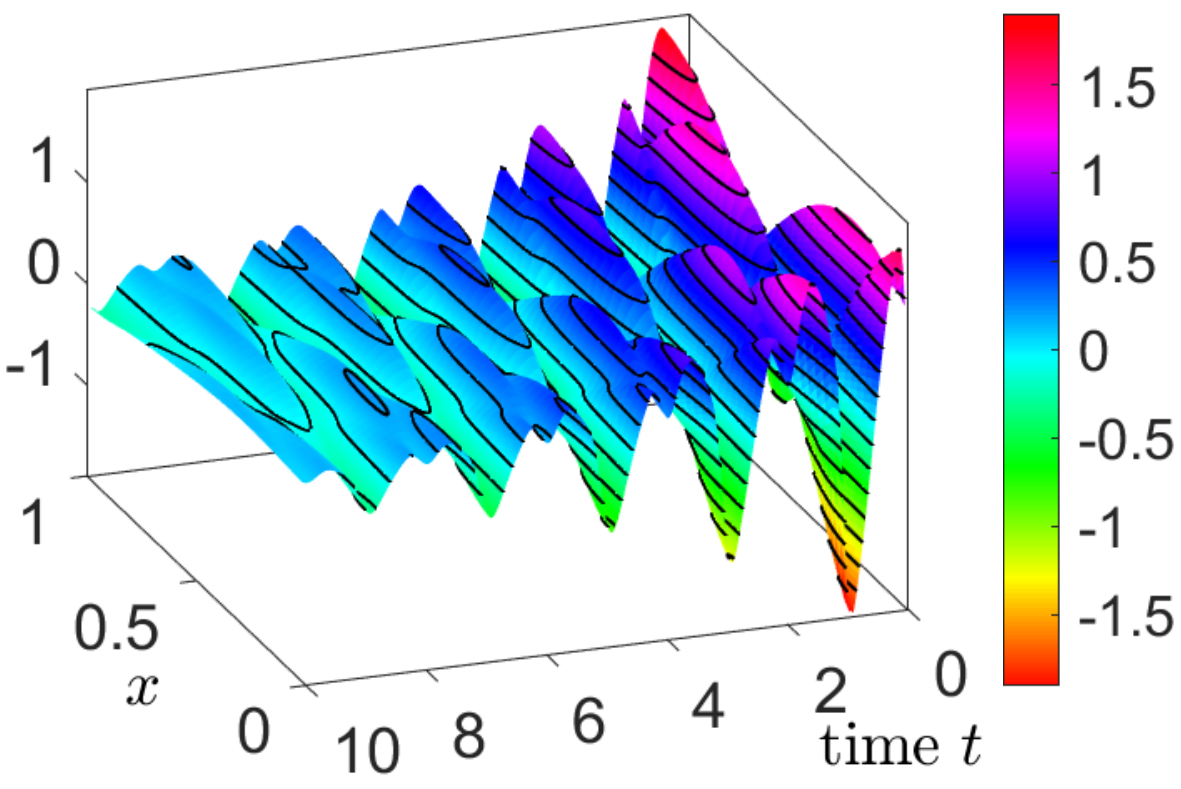}}
	\end{minipage}
	\hfil
	\begin{minipage}{0.45\textwidth}
		\begin{center}	
			$\boldsymbol{\mud_{Q}(t)}$\textbf{  and }$\boldsymbol{\mu=0.1}$	
		\end{center}
		\vspace{0mm}
		\scalebox{1}{\includegraphics[width=\linewidth]{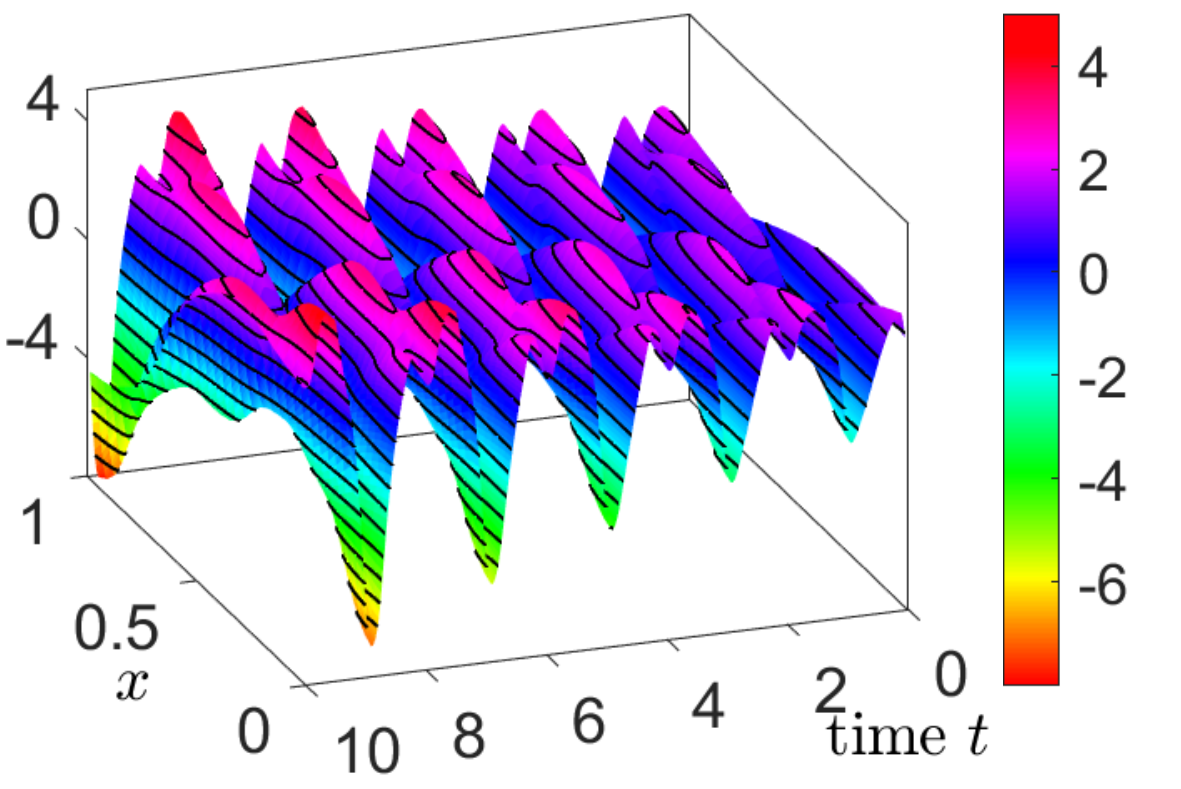}}
	\end{minipage}
	
	
	\bigskip\bigskip
	
	\begin{minipage}{0.45\textwidth}
		\begin{center}	
			$\boldsymbol{\mud_{\M}(t)}$\textbf{  and }$\boldsymbol{\mu=1}$	
		\end{center}
		\vspace{0mm}
		\scalebox{1}{\includegraphics[width=\linewidth]{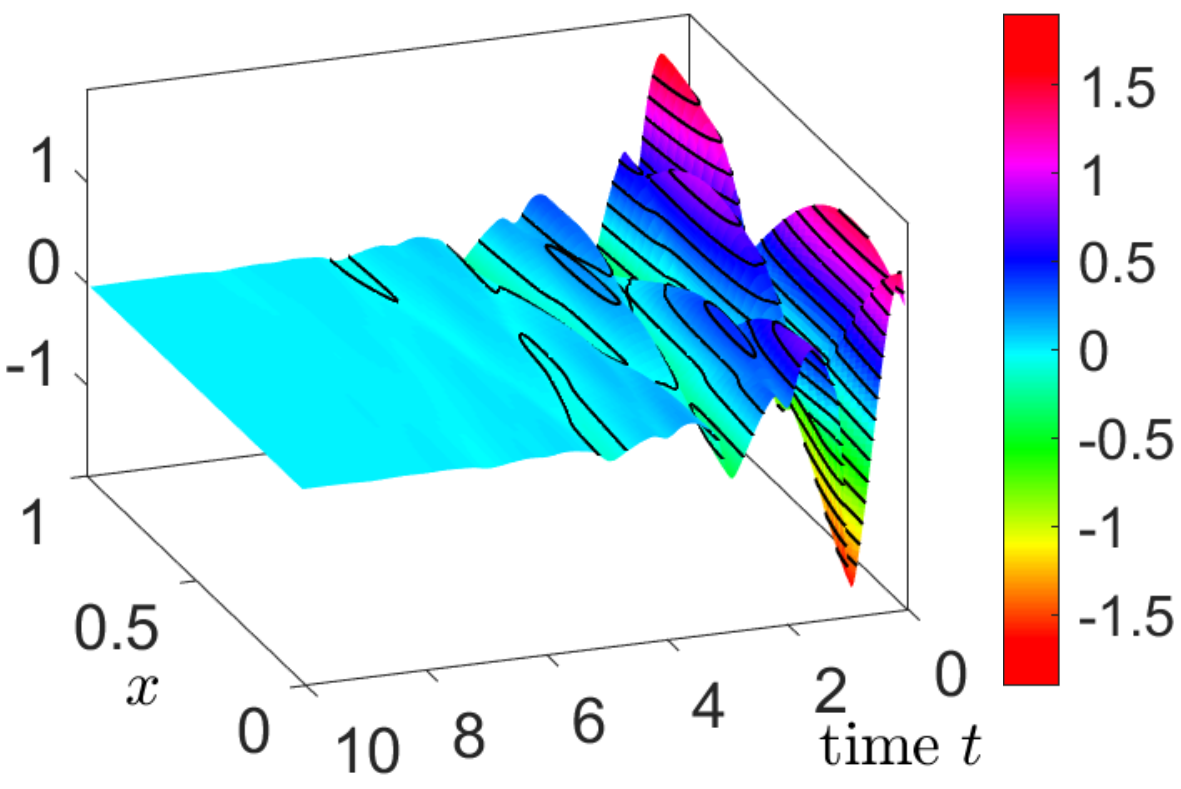}}
	\end{minipage}
	\hfil
	\begin{minipage}{0.45\textwidth}
		\begin{center}	
			$\boldsymbol{\mud_{\mathcal{Q}}(t)}$\textbf{  and }$\boldsymbol{\mu=1}$	
		\end{center}
		\vspace{0mm}	\scalebox{1}{\includegraphics[width=\linewidth]{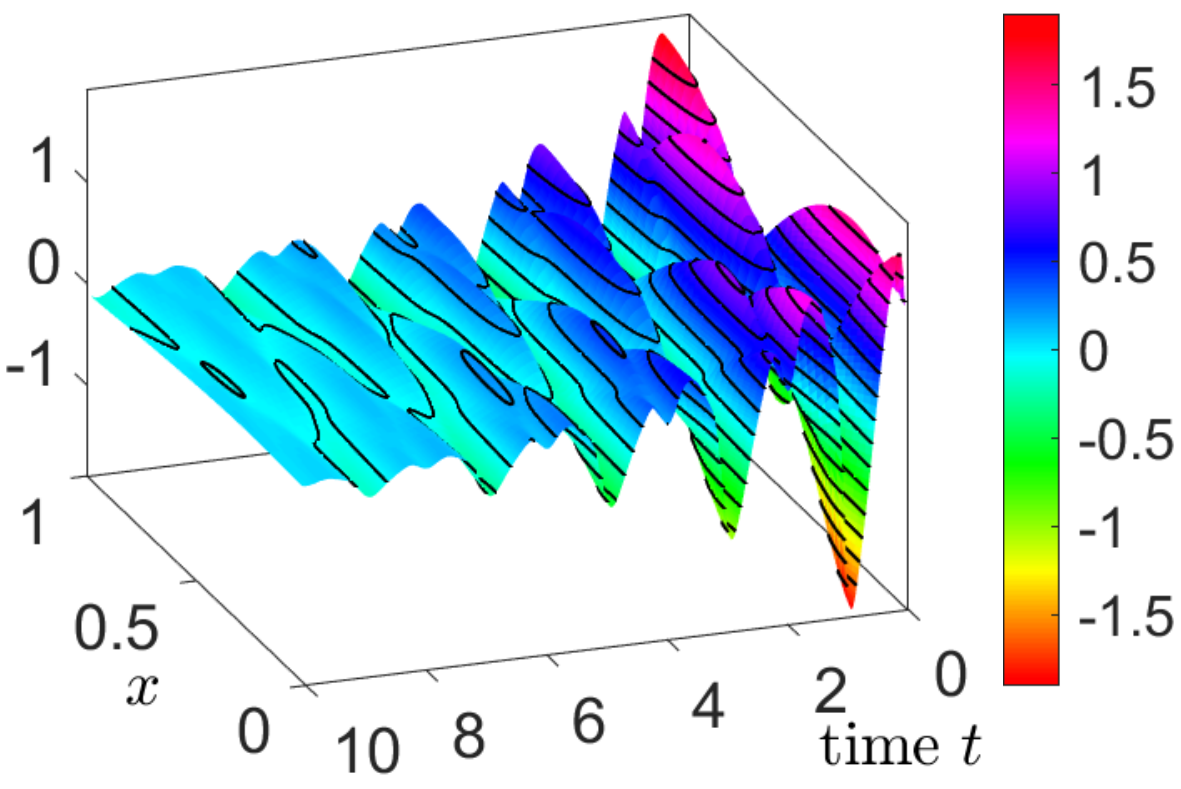}}
	\end{minipage}

	\caption{Deviations of the density to the steady state. The upper panels show a simulation with desired decay rate~$\mu=0.1$ where the parameters~${\mud_{\M}(t)}$~(left) and ${\mud_{\mathcal{Q}}(t)}$ (right) are used. The lower panels show simulations for the decay rate~$\mu=1$, respectively. }
	\label{Corollary_1_Solution}
	
\end{figure}

\begin{figure}[H]
	\begin{minipage}{0.45\textwidth}
		\begin{center}	
			$\boldsymbol{\mu=0.1}$	
		\end{center}
		\scalebox{1}{\includegraphics[width=\linewidth]{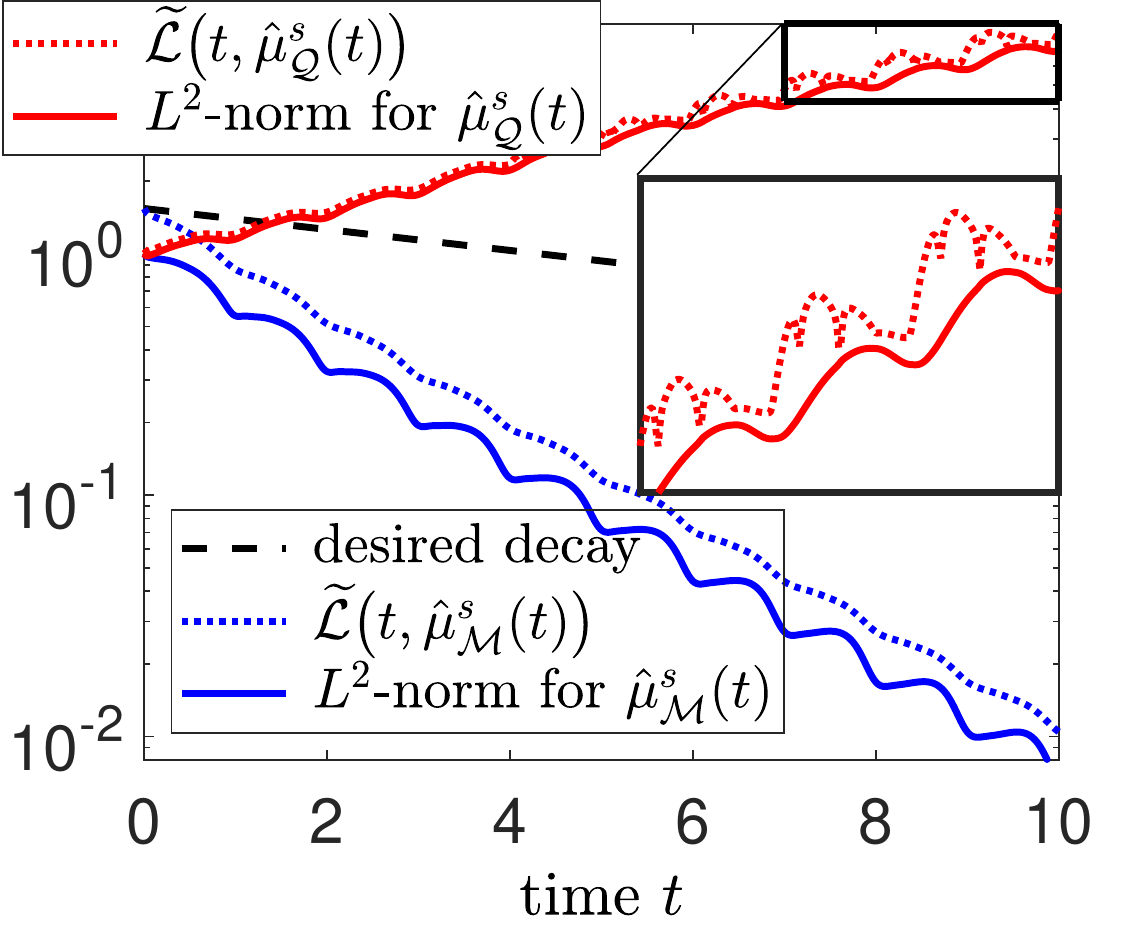}}
	\end{minipage}
	\hfil
	\begin{minipage}{0.45\textwidth}
		\begin{center}	
			$\boldsymbol{\mu=1}$	
		\end{center}
		\scalebox{1}{\includegraphics[width=\linewidth]{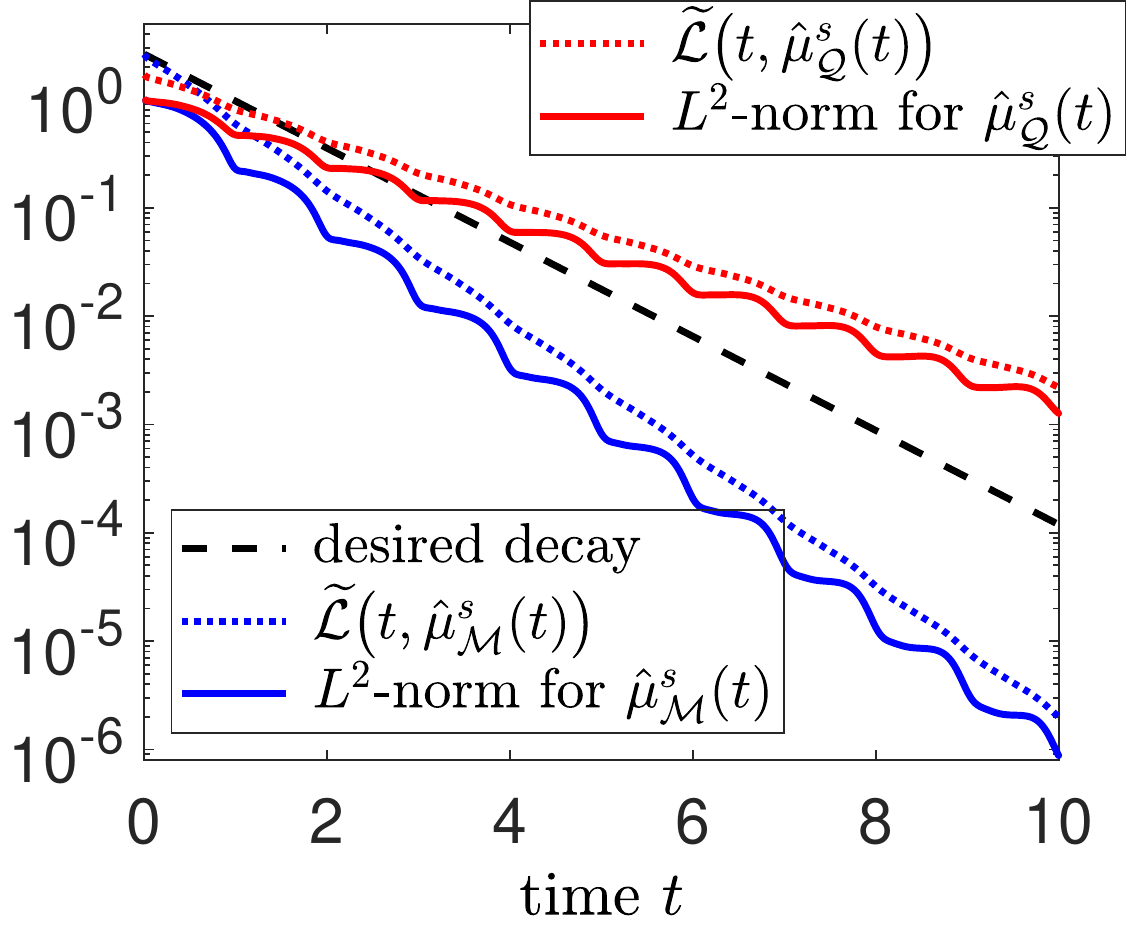}}
	\end{minipage}

	\caption{The~$L^2$-norm~$\big\lVert \Riemann(t,\cdot) \big\rVert^2_{L^2}$ 
		and the scaled Lyapunov function~\eqref{LFtilde}	
		obtained by simulations with the parameter~$\mud_{{\M }(t)}$ are plotted in blue. Simulations that   are based on the parameter~$\mud_{{\mathcal{Q} }(t)}$ are shown in red, respectively. The desired exponential decay is black dashed. }
	\label{Corollary_1_Lyapunov}
\end{figure}

\begin{figure}[H]
	\begin{minipage}{0.45\textwidth}
		\begin{center}	
			$\boldsymbol{\mu=0.1}$	
		\end{center}	
		\scalebox{1}{\includegraphics[width=\linewidth]{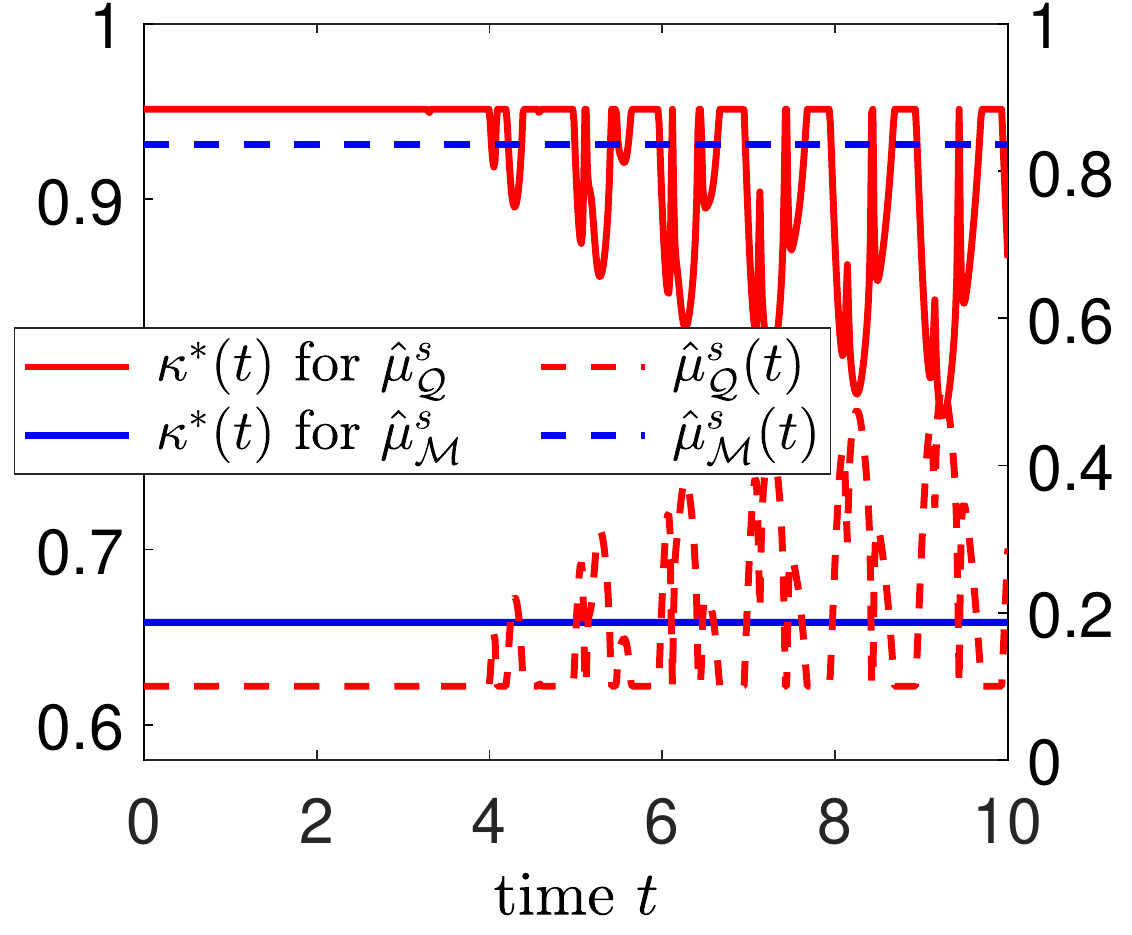}}
	\end{minipage}
	\hfil
	\begin{minipage}{0.45\textwidth}
		\begin{center}	
			$\boldsymbol{\mu=1}$	
		\end{center}
		\scalebox{1}{\includegraphics[width=\linewidth]{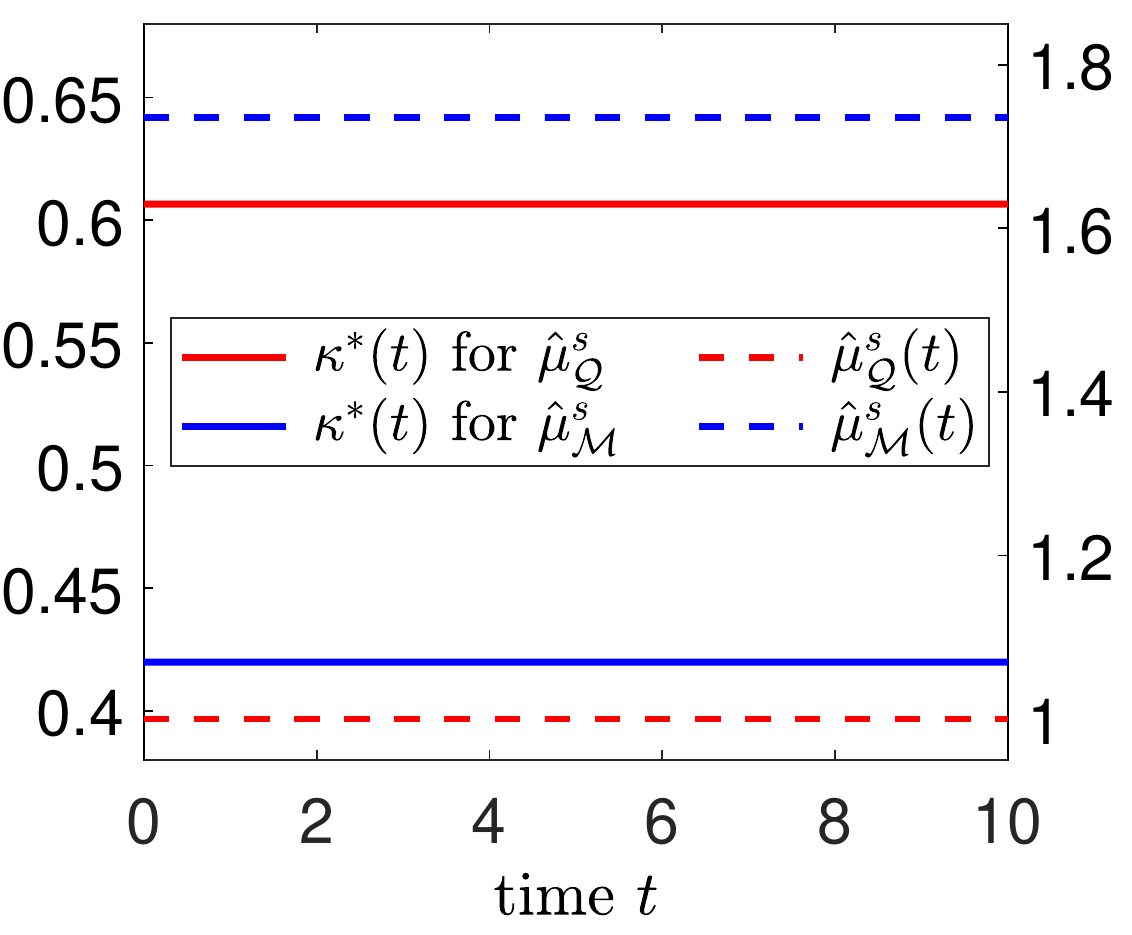}}
	\end{minipage}
	
	\caption{Left~$y$-axis shows the obtained boundary control~$\kappa^*(t)$ for the parameters~$\mud_{{\mathcal{Q} }(t)}$ and $\mud_{{\mathcal{M} }(t)}$ which are plotted against the right axis as dashed line.}
	\label{Corollary_1_BC}	
	
\end{figure}

Figure~\ref{Corollary_1_BC} shows the obtained boundary control~$\kappa^*(t)$ and the parameter~$\mud(t)$, which enters the Lyapunov function.  
	More precisely, 
	the parameters $\mud_{{\M }(t)}$ and $\mud_{{\mathcal{Q} }(t)}$ are dashed plotted against the right~$y$-axis. 
	The resulting control~$\kappa^*(t)$ is shown with respect to the left~$y$-axis. Namely, the control corresponding to the parameter~$\mud_{{\M }(t)}$ 
	is plotted as blue line and  that one corresponding to~$\mud_{{\mathcal{Q} }(t)}$ as red line, respectively. 
We observe that simulations that are based on the weighted Rayleigh quotient yield smaller values~$
\mud_{{\mathcal{Q} }(t)}
\leq
\mud_{{\M }(t)}
$ 
which in turn result in a larger control parameter. Likewise, a smaller desired decay rate leads to larger control parameters. Furthermore, we note that the control based on the Rayleigh quotient, which does not steer the system to the steady state, still counteracts fluctuations. In particular, it actively reduces the control parameters when instabilities increase. 
	A zoom in the left panel of Figure~\ref{Corollary_1_Lyapunov} highlights the non-monotonic behaviour of the scaled Lyapunov function and the $L^2$-norm of deviations when the control parameter~$\mud_{\mathcal{Q} }(t)$ is used. These oscillations arise from fluctuations of the control, which are illustrated in  the left panel of Figure~\ref{Corollary_1_Lyapunov}. In contrast, the~control parameter~$\mud_{\mathcal{M} }(t)$ is almost constant and the resulting Lyapunov function decays monotonically.

\subsection{Lipschitz continuous source term}\label{SecTurningII}

The distance to the equilibrium state is shown in Figure~\ref{Corollary_2_Solution} for the boundary control that is obtained by the parameters~$\mud_{\M}(t)$~(left panel) and~$\mud_{\Q}(t)$~(right panel). 
As expected, the parameter~$\mud_{\M}(t)$ leads to a boundary control that steers the system faster to the steady state.

\begin{figure}[H]
	\begin{minipage}{0.45\textwidth}
		\begin{center}	
			\textbf{perturbations for }$\boldsymbol{\mud_{\M}(t)}$	
		\end{center}
		\vspace{0mm}
		\scalebox{1}{\includegraphics[width=\linewidth]{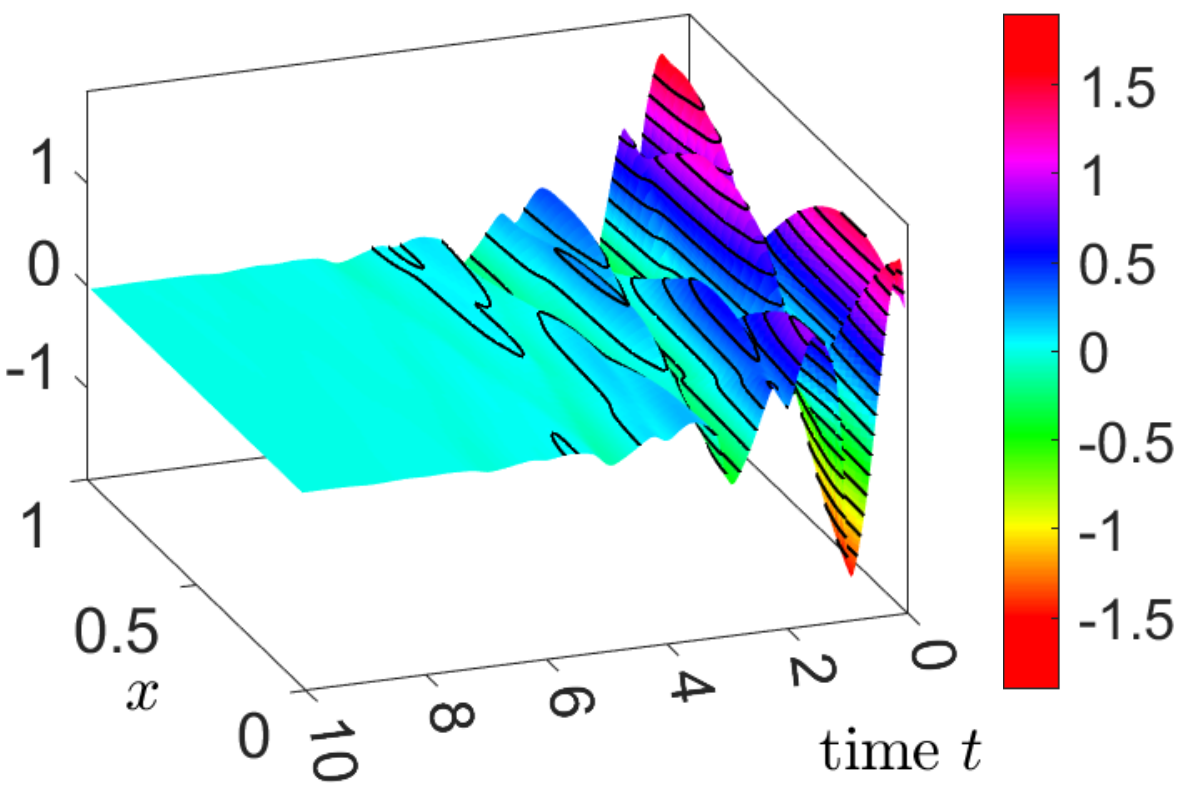}}
	\end{minipage}
	\hfil
	\begin{minipage}{0.45\textwidth}
		\begin{center}	
			\textbf{perturbations for }$\boldsymbol{\mud_{\mathcal{Q}}(t)}$	
		\end{center}
		\vspace{0mm}
		\scalebox{1}{\includegraphics[width=\linewidth]{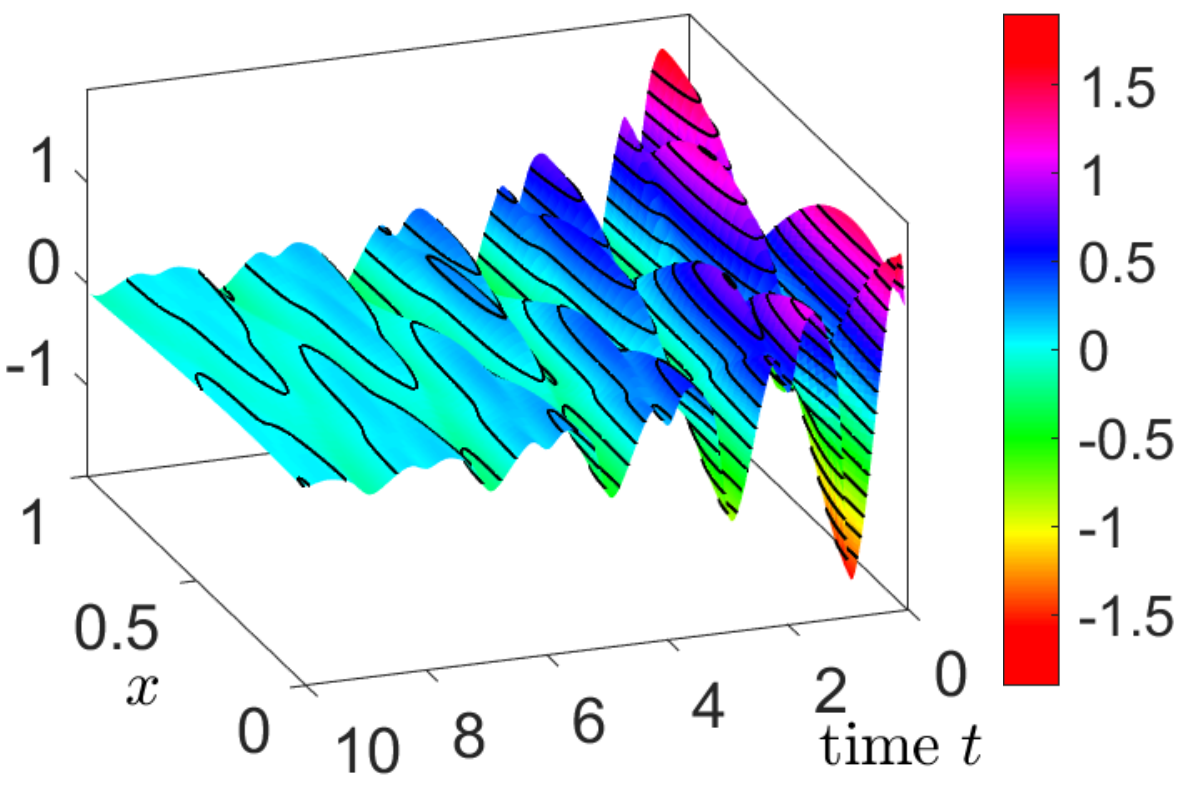}}
	\end{minipage}
	
	\caption{Deviations of the density to the steady state  with desired decay rate~$\mu=1$, where the parameter~${\mud_{\M}(t)}$~(left) and ${\mud_{\mathcal{Q}}(t)}$ (right) are used for the boundary control. }
	\label{Corollary_2_Solution}
	
\end{figure}

\begin{figure}[H]
	\begin{minipage}{0.45\textwidth}
		\scalebox{1}{\includegraphics[width=\linewidth]{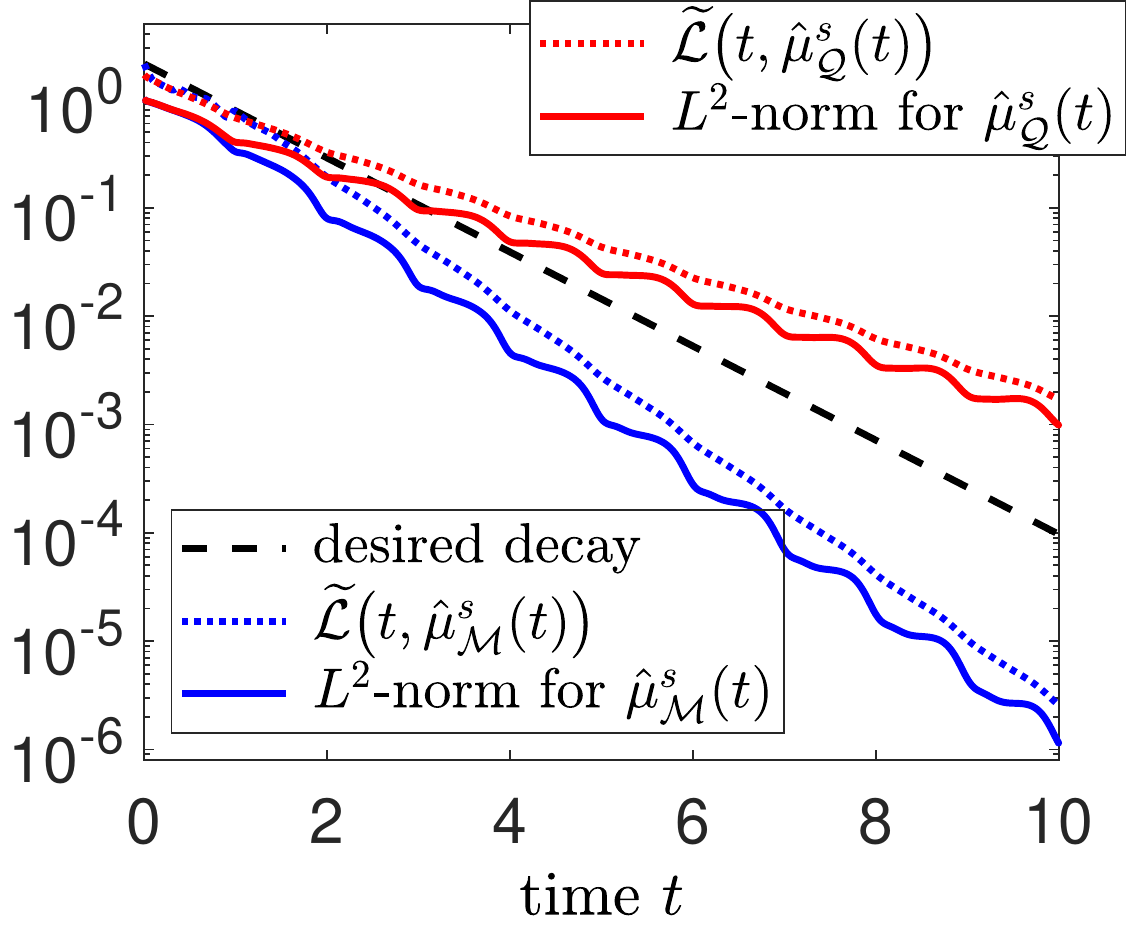}}
	\end{minipage}
	\hfil
	\begin{minipage}{0.45\textwidth}
		\scalebox{1}{\includegraphics[width=\linewidth]{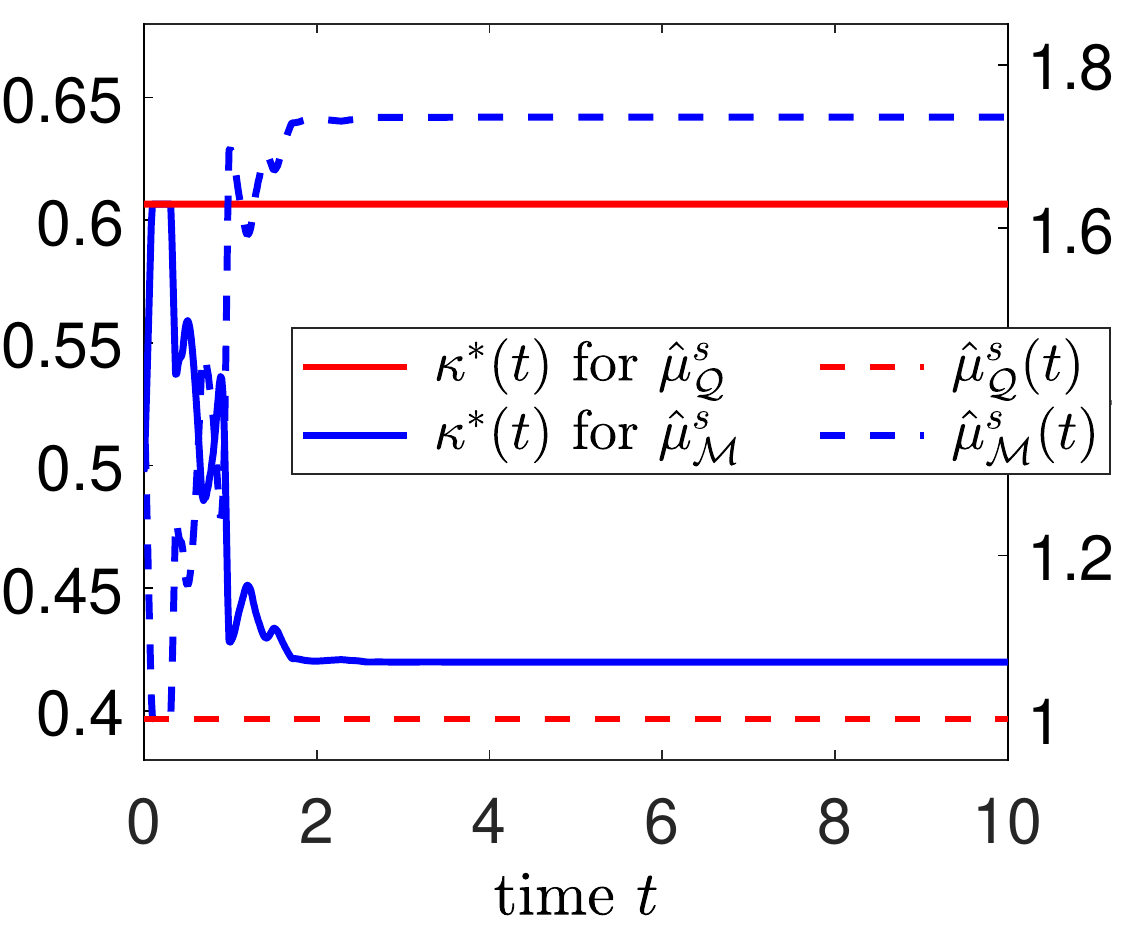}}
	\end{minipage}
	
	\caption{The left panel shows the scaled Lyapunov function~\eqref{LFtilde} and  the~$L^2$-norm  obtained by simulations with the parameter~$\mud_{{\M }(t)}$ in blue, while those corresponding to the  parameter~$\mud_{{\mathcal{Q} }(t)}$ are shown in red.  The desired decay is black dashed. The right panel states the control~$\kappa^*(t)$ at the left~$y$-axis and the corresponding parameters~$\mud_{{\Q }(t)}$ and $\mud_{{\M }(t)}$  at the right axis.}
	\label{Corollary_2_Control}
	
	\color{blue}
	
\end{figure}

The decay is shown in the left panel of Figure~\ref{Corollary_2_Control}. Therein, the $L^2$-norm of the distance to the steady state is shown as blue line for the parameter~$\mud_{\M}(t)$ and in red for the control that is based on the weighted Rayleigh quotient. The scaled~Lyapunov function  yields an upper bound on the $L^2$-norm and decays at least with the desired rate (black, dashed) if the parameter~$\mud_{\M}(t)$ is used. The~$L^2$-norm of deviations that are based on the weighted Rayleigh quotient decays more slowly, but still exponentially fast. 

The right panel of Figure~\ref{Corollary_2_Control} shows the boundary control parameter in the scale of the left~$y$-axis. The weighted Rayleigh quotient leads to a smaller time-dependent value $\mud_{\Q}(t)\leq \mud_{\M}(t)$ and hence to a larger control parameter. Furthermore, the control remains  stable, while large changes in the control occur if the parameter~$\mud_{\M}(t)$ is used.

\subsection{General source term}\label{SecTurningIII}

Figure~\ref{Corollary_3_solution} shows the solution to the nonlinear source term for the decay rate~$\mu=0.1$~(left panel) and~$\mu=1$~(right panel), where the control  based on the parameter~$\mud_{{\M }(t)}$ is used. We observe that the control is able to stabilize the system. More precisely, Figure~\ref{Corollary_3_LF} shows the scaled Lyapunov function~\eqref{LFtilde} and  the $L^2$-norm of deviations. 
	In this particular example, 
	simulations for both $\mud_{{\M }(t)}$ and $\mud_{{\Q }(t)}$ 
	only slightly differ and cannot be distinguished in the plot. 
We observe that the control steers the system to the equilibrium with the desired decay, which is black dashed.  However, the Lyapunov function is~\emph{not strictly decreasing}.

	Figure~\ref{Shock} considers discontinuous initial data~$\Riemann^\pm(0,x)=2\, \textup{sign}(x)$ and a control based on the weighted Rayleigh quotient, i.e.~the 	parameter~$\mud_{{\mathcal{Q} }(t)}$ is used. 	
	For comparison, a simulation without source term is included. The~$L^2$-norm of deviations, described by the resulting conservation law, is shown as blue line. 
	The scaled Lyapunov function~\eqref{LFtilde} yields an upper bound that decays exponentially fast. The observed decay is within the desired rates~$\mu=0.1$ (left panel) and $\mu=1$ (right panel), which are plotted as dashed, black lines. 
	Deviations that result from the semilinear system are shown in red. The scaled Lyapunov function, which serves as upper bound for the $L^2$-norm, decreases over time and makes the $L^2$-norm decay with an asymptotic rate that is similar to the desired decay rate. 
	However, the decay is non-monotone and deviations may be larger than desired.

Hence,  observations from~Figure~\ref{Corollary_3_LF} and Figure~\ref{Shock} 
reflect the fact that the computational framework is beyond the theoretical stabilization concept for~$L^2$-solutions if it is applied to nonlinear source terms.

\begin{figure}[H]
	\begin{minipage}{0.45\textwidth}
		\begin{center}	
			$\boldsymbol{\mu=0.1}$	
		\end{center}	
		\scalebox{1}{\includegraphics[width=\linewidth]{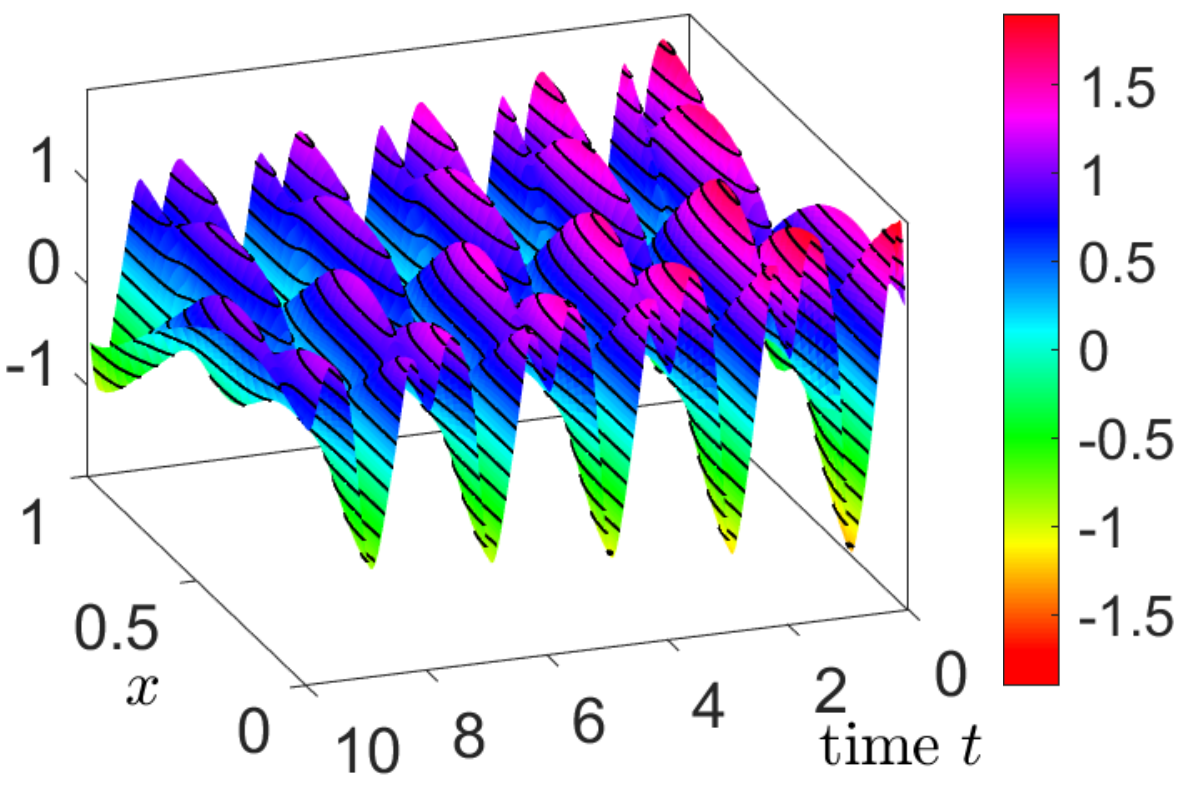}}
	\end{minipage}
	\hfil
	\begin{minipage}{0.45\textwidth}
		\begin{center}	
			$\boldsymbol{\mu=1}$	
		\end{center}
		\scalebox{1}{\includegraphics[width=\linewidth]{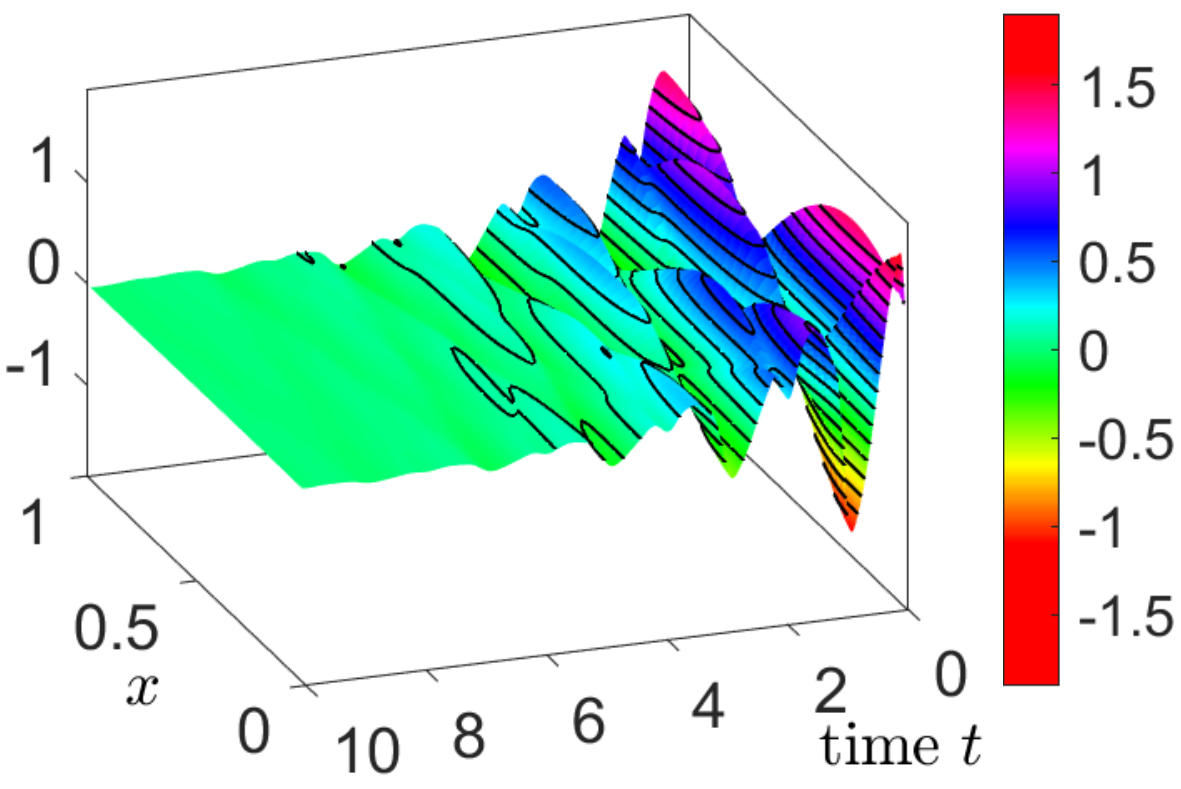}}
	\end{minipage}
	
	\caption{Deviations of the density from steady state for the nonlinear systems with the control based on the parameter~$\mud_{{\M }(t)}$.}
	\label{Corollary_3_solution}	
	
\end{figure}

\begin{figure}[H]
	\begin{minipage}{0.45\textwidth}
		\begin{center}	
			$\boldsymbol{\mu=0.1}$	
		\end{center}	
		\scalebox{1}{\includegraphics[width=\linewidth]{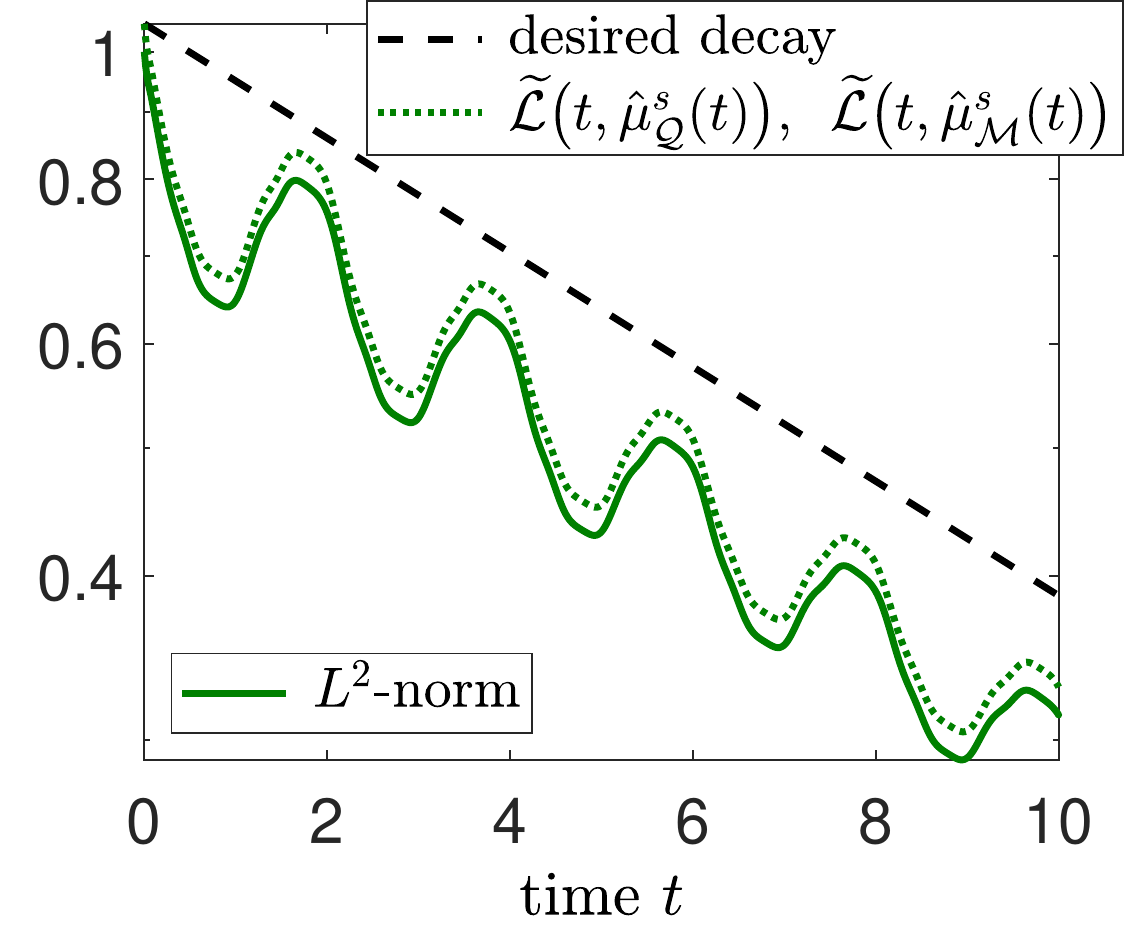}}
	\end{minipage}
	\hfil
	\begin{minipage}{0.45\textwidth}
		\begin{center}	
			$\boldsymbol{\mu=1}$	
		\end{center}
		\scalebox{1}{\includegraphics[width=\linewidth]{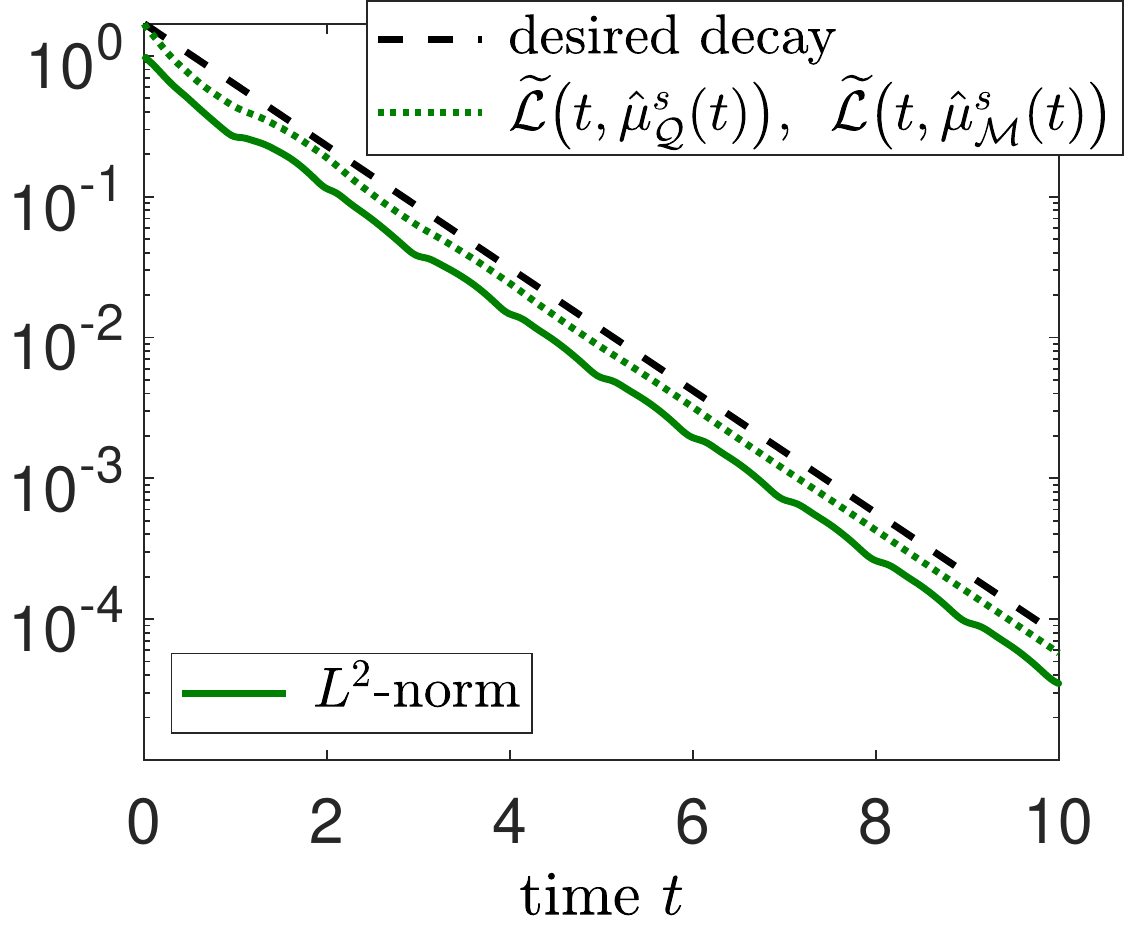}}
	\end{minipage}
	
	\caption{Deviations from steady state are stated in terms of the 
		scaled Lyapunov function~\eqref{LFtilde} as dashed line, which is an upper bound of the $L^2$-norm. Here, simulations for both parameters differ in a magnitude less than~$\sim10^{-5}$.}
	\label{Corollary_3_LF}	
	
\end{figure}


\begin{figure}[H]
	\begin{minipage}{0.45\textwidth}
		\begin{center}	
			$\boldsymbol{\mu=0.1}$	
		\end{center}	
		\scalebox{1}{\includegraphics[width=\linewidth]{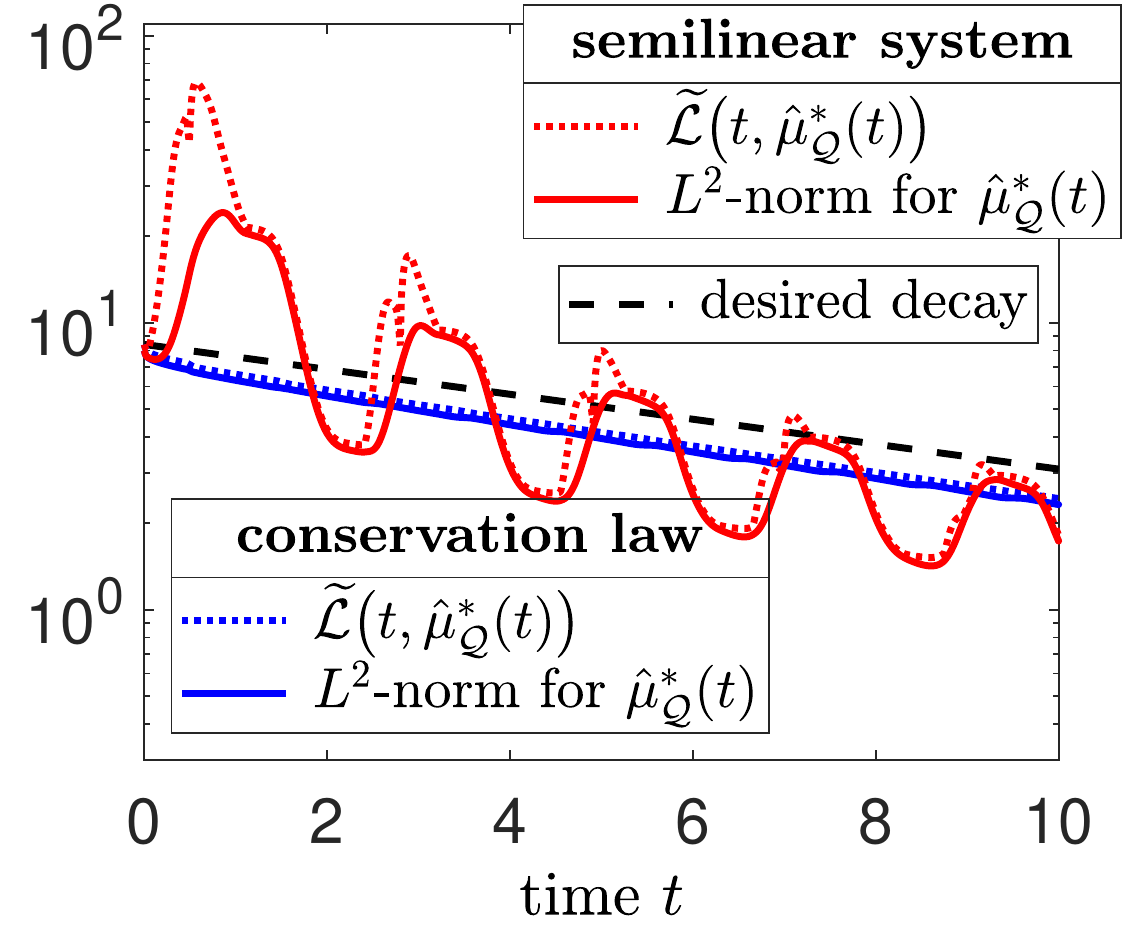}}
	\end{minipage}
	\hfil
	\begin{minipage}{0.45\textwidth}
		\begin{center}	
			$\boldsymbol{\mu=1}$	
		\end{center}
		\scalebox{1}{\includegraphics[width=\linewidth]{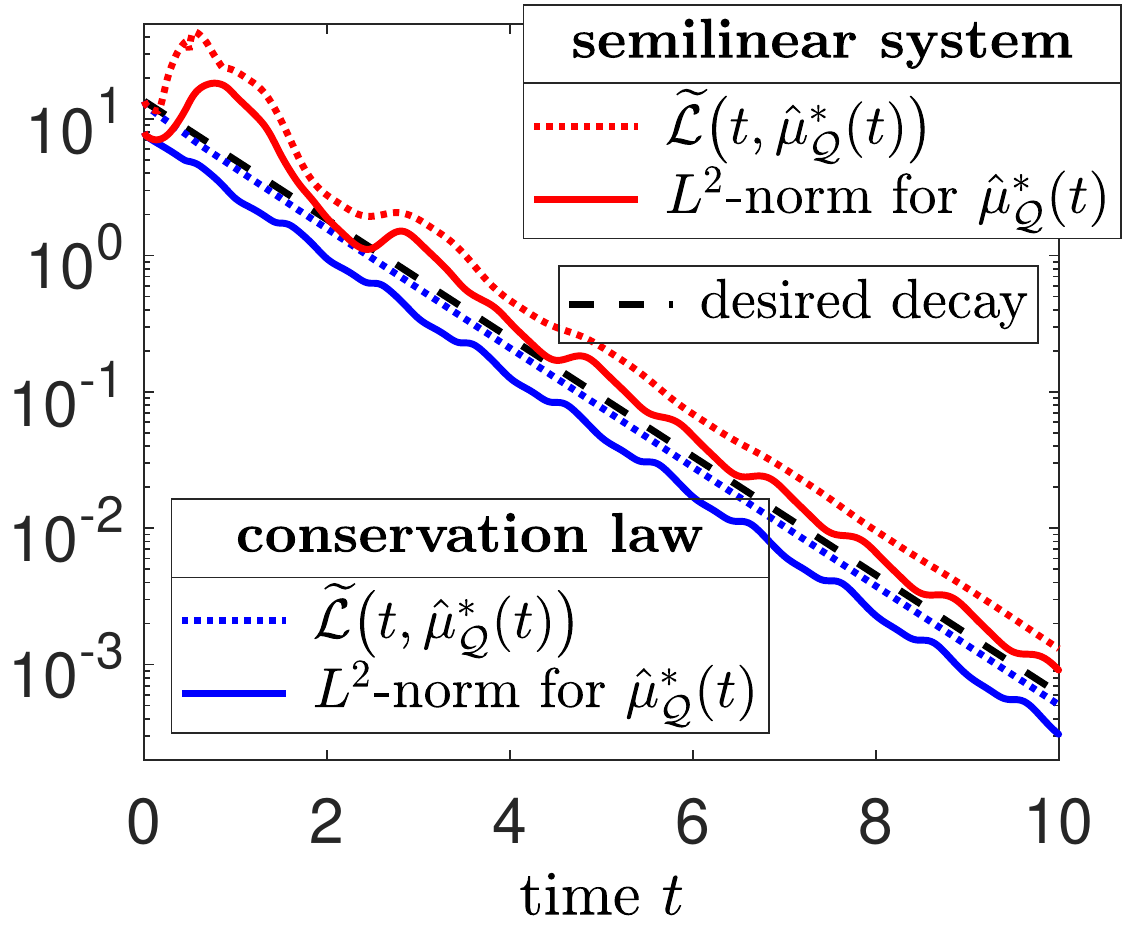}}
	\end{minipage}
	
	\caption{Discontinuous initial data~$
		\Riemann^\pm(0,x) 
		=
		2\,
		\textup{sign}(x)
		$ and feedback control based on the weighted Rayleigh quotient. 
		Simulations for the conservation law without source term are shown in blue. The scaled Lyapunov function~\eqref{LFtilde} yields an upper bound on the $L^2$-norm that  decays at least with the desired rate~$\mu=0.1$ (left panel) and $\mu=1$ (right panel). 
		Simulations for the semilinear system are shown in red. The corresponding deviations decrease non-monotonically.}
	\label{Shock}	
	
\end{figure}

\subsection{Conclusions from numerical experiments}
The numerical results show that the computational framework is able to stabilize semilinear hyperbolic balance laws. 
The control that is based on the weighted Rayleigh quotient is computationally less expensive and leads to a larger control parameter. However, the obtained control may be not sufficient to establish exponential decay in general. 

In contrast, 
the control that uses the parameter~$\mud_{{\M}(t)}$ is computationally more expensive and yields a smaller control parameter. However, this choice can ensure  exponential decay to a desired state for problems where the control based on the weighted Rayleigh quotient only counteracts instabilities. 

Finally, we recall that the numerical Lyapunov function is not necessarily strictly decreasing, since the involved parameters are time-dependent. Still, the presented framework is consistent with theoretical results when a fixed parameter~$\mud(t)=\mub$ is used.

\section*{Summary}
	We have considered the numerical treatment of stabilization problems for semilinear hyperbolic boundary value problems. A Lyapunov function that yields an upper bound on the~$L^2$-norm of the distance to a desired state
is used as an analytical  tool. It is defined up to a parameter that is calculated numerically by a high-order CWENO reconstruction.
The computational framework gives sufficient conditions on a feedback control  to steer the system to a desired state. Numerical experiments illustrate the applicability of the presented approach and its consistency to previous theoretical findings.

\section*{Acknowledgments}

This work is supported by the PRIME programme of the German Academic Exchange Service (DAAD).  
The authors acknowledge support
from ``National Group for Scientific Computation (GNCS-INDAM)'' and by MUR
(Ministry of University and Research) PRIN2017 project number 2017KKJP4X.\\

\noindent
Furthermore, we would like to offer special thanks to the anonymous reviewers for their  valueable feedback.

\section*{Conflict of interest}

The authors declare that they have no conflict of interest.


\bibliographystyle{AIMS}
\bibliography{mybib}

\medskip

\end{document}